
\documentclass[12pt,reqno]{amsart}
\usepackage{amsmath}
\usepackage{amssymb}
\usepackage{algorithm}
\usepackage{algorithmic}

\setcounter{MaxMatrixCols}{10}

\def\B'c{{\mathcal{B'}}}
\def\U'c{{\mathcal{U'}}}

\def\opn#1#2{\def#1{\operatorname{#2}}}
\opn\chara{char}
\opn\length{\ell}
\opn\projdim{proj\,dim}
\opn\injdim{inj\,dim}
\opn\ini{in}
\opn\rank{rank}
\opn\depth{depth}
\opn\height{ht}
\opn\embdim{emb\,dim}
\opn\codim{codim}

\opn\Tr{Tr}
\opn\bigrank{big\,rank}
\opn\superheight{superheight}\opn\lcm{lcm}
\opn\trdeg{tr\,deg}\opn\reg{reg}
\opn\lreg{lreg}
\opn\set{set}
\opn\supp{Supp}
\opn\shad{Shad}
\opn\del{del}
\opn\div{div}
\opn\Div{Div}
\opn\cl{cl}
\opn\Cl{Cl}
\opn\Spec{Spec}
\opn\Supp{Supp}
\opn\supp{supp}
\opn\Sing{Sing}
\opn\Ass{Ass}
\opn\Ann{Ann}
\opn\Rad{Rad}
\opn\Soc{Soc}
\opn\Ker{Ker}
\opn\Coker{Coker}
\opn\Im{Im}
\opn\Hom{Hom}
\opn\Tor{Tor}
\opn\Ext{Ext}
\opn\End{End}
\opn\Aut{Aut}
\opn\id{id}

\opn\nat{nat}
\opn\GL{GL}
\opn\SL{SL}
\opn\mod{mod}
\opn\ord{ord}
\opn\aff{aff}
\opn\con{conv}
\opn\relint{relint}
\opn\st{st}
\opn\lk{lk}
\opn\cn{cn}
\opn\core{core}
\opn\vol{vol}
\opn\gr{gr}

\def\pot#1#2{#1[\kern-0.28ex[#2]\kern-0.28ex]}

\opn\dirlim{\underrightarrow{\lim}}
\opn\invlim{\underleftarrow{\lim}}

\def\pnt{{\raise0.5mm\hbox{\large\bf.}}}

\def\Implies{\ifmmode\Longrightarrow \else
     \unskip${}\Longrightarrow{}$\ignorespaces\fi}
\def\implies{\ifmmode\Rightarrow \else
     \unskip${}\Rightarrow{}$\ignorespaces\fi}
\def\iff{\ifmmode\Longleftrightarrow \else
     \unskip${}\Longleftrightarrow{}$\ignorespaces\fi}

\let\:=\colon
\newtheorem{Theorem}{Theorem}[section]
\newtheorem{Lemma}[Theorem]{Lemma}
\newtheorem{Corollary}[Theorem]{Corollary}
\newtheorem{Proposition}[Theorem]{Proposition}
\newtheorem{Remark}[Theorem]{Remark}

\let\epsilon=\varepsilon
\let\phi=\varphi
\let\kappa=\varkappa
\textwidth=15cm
\textheight=22cm
\topmargin=0.5cm
\oddsidemargin=0.5cm
\evensidemargin=0.5cm
\pagestyle{plain}
\footskip=40 pt

\begin{document}
\title{On the Hilbert series of vertex cover algebras of Cohen-Macaulay
bipartite graphs}
\author{Cristian Ion}
\address{Faculty of Mathematics and Computer Science, Ovidius University,
Bd.\ Mamaia 124, 900527 Constanta, Romania,}
\email{cristian.adrian.ion@gmail.com}
\maketitle

\begin{abstract}
We study the Hilbert function and the Hilbert series of the vertex cover
algebra $A(G)$, where $G$ is a Cohen-Macaulay bipartite graph.\\

{\bf MSC}: 05E40, 13P10.\\

{\bf Keywords}: Cohen-Macaulay bipartite graph, Vertex cover, Hilbert series.
\end{abstract}

\section{Introduction}

Let $G=(V,E)$ be a simple (i.e., finite, undirected, loop less and without
multiple edges) graph with the vertex set $V=[n]$ and the edge set $E=E(G)$.
A\textit{\ vertex cover} of $G$ is a subset $C\subset V$ such that $C\cap
\{i,j\}\neq \emptyset $, for any edge $\{i,j\}\in E(G)$. A vertex cover $C$
of $G$ is called \textit{minimal} if no proper subset $C^{\prime }\subset C$
is a vertex cover of $G$. A graph $G$ is called \textit{unmixed} if all
minimal vertex covers of $G$ have the same cardinality. Let $%
R=K[x_{1},\ldots ,x_{n}]$ be the polynomial ring in $n$ variables over a
field $K$. The \textit{edge ideal} of $G$ is the monomial ideal $I(G)$ of $R$
generated by all the quadratic monomials $x_{i}x_{j}$ with $\{i,j\}\in E(G)$%
. It is said that a graph $G$ is $Cohen-Macaulay$ (over $K$) if the quotient
ring $R/I(G)$ is Cohen-Macaulay. Every Cohen-Macaulay graph is unmixed.

A vertex cover $C\subset \lbrack n]$ can be represented as a $(0,1)-$vector $%
c$ that satisfies the restriction $c(i)+c(j)\geq 1$, for every $\{i,j\}\in
E(G)$. For each $k\in \mathbf{N}$, a\textit{\ vertex cover }of\textit{\ }$G$
of \textit{order} $k$, or simply a $k-$\textit{vertex cover} of $G$, is a
vector $c\in \mathbf{N}^{n}$ such that $c(i)+c(j)\geq k$, for every $%
\{i,j\}\in E(G)$. The \textit{vertex cover algebra} $A(G)$ is defined as the
subalgebra of the one variable polynomial ring $R[t]$ generated by all
monomials $x_{1}^{c_{1}}\cdots x_{n}^{c_{n}}t^{k}$, where $c=(c_{1},\ldots
,c_{n})\in \mathbf{N}^{n}$ is a $k$-vertex cover of $G$. This algebra was
introduced and first studied in \cite{HerHib3}. Let $\mathfrak{m}$ be the
maximal graded ideal of $R$. The graded $K$-algebra $\bar{A}(G)=A(G)/%
\mathfrak{m}A(G)$ is called the \textit{basic cover algebra }and it was
introduced and first studied in \cite[Section 3]{HerHib2}.

Our aim in this paper is to study the Hilbert function and series of the
vertex cover algebra $A(G)$ for Cohen-Macaulay bipartite graphs.

Let $P_{n}=\{p_{1},p_{2},\ldots ,p_{n}\}$ be a poset with a partial order $%
\leq $. Let $G=G(P_{n})$ be the bipartite graph on the set $V_{n}=W\cup
W^{\prime }$, where $W=\{x_{1},x_{2},\ldots ,x_{n}\}$ and $W^{\prime
}=\{y_{1},y_{2},\ldots ,y_{n}\}$, whose edge set $E(G)$ consists of all $2$%
-element subsets $\{x_{i},y_{j}\}$ with $p_{i}\leq p_{j}$. It is said that a
bipartite graph $G$ on $V_{n}=W\cup W^{\prime }$ \textit{comes from a poset}%
, if there exists a finite poset $P_{n}$ on $\{p_{1},p_{2},\ldots ,p_{n}\}$
such that $p_{i}\leq p_{j}$ implies $i\leq j$, and after relabeling of the
vertices of $G$ one has $G=G(P_{n})$. Herzog and Hibi proved in \cite%
{HerHib1} that a bipartite graph $G$ is Cohen-Macaulay if and only if $G$
comes from a poset.

In Section 2, we firstly notice that the Hilbert function and series of the
vertex cover algebras $A(G)$ are invariant to poset isomorphisms. We obtain
a recurrence relation for the minimal vertex covers of a Cohen-Macaulay
graph $G$ and we study the Hilbert function of $A(G).$

In Section 3, we study the Hilbert series of $A(G)$. For a poset $%
P_{n}=\{p_{1},p_{2},...,p_{n}\}$ we denote by $\mathcal{J}(P_{n})$ the
lattice of all poset ideals of $P_{n}$. For each subset $\emptyset \neq
F\subset \lbrack n]$ we denote by $P_{n}(F)$ the subposet of $P_{n}$ induced
by the subset $\{p_{i}|i\in F\}$ and by $G_{F}$ the bipartite graph that
comes from $P_{n}(F)$. The main result of this paper is given in Theorem \ref%
{comp}, which shows that one may reduce the computation of the Hilbert
series of the vertex cover algebra $A(G)$ to the computation of the Hilbert
series of the basic cover algebra $\bar{A}(G_{F})$, for all $F\subset
\lbrack n]$. If $F=\emptyset $, then, by convention, the Hilbert series of $%
\bar{A}(G_{F})$ is equal to $\frac{1}{1-z}$. Namely, we have the following
formula: 
\begin{equation*}
H_{A(G)}(z)=\frac{1}{(1-z)^{n}}\sum\limits_{F\subset \lbrack n]}H_{\bar{A}%
(G_{F})}(z)\left( \frac{z}{1-z}\right) ^{n-\left\vert F\right\vert }\text{.}
\end{equation*}

Moreover, we give a combinatorial interpretation for the $h$-vector of $A(G)$
in terms of the poset $P_n.$ Using this interpretation we show that the $h$%
-vector of $A(G)$ is unimodal. We give bounds for its components and derive
bounds for $e(A(G)),$ the multiplicity of $A(G).$

We show that both chains and antichains are uniquely determined up to a
poset isomorphism by the Hilbert series of their corresponding vertex cover
algebras.

\section{Vertex cover algebras of Cohen-Macaulay bipartite graphs}

Let $S=K[x_{1},...,x_{n},y_{1},...,y_{n}]$ and let $G=G(P_{n})$, where $%
P_{n}=\{p_{1},...,p_{n}\}$ is a poset with a partial order $\leq $. We
recall that, by \cite{HerHib3}, the vertex cover algebra $A(G)$ is standard
graded over $S$ and it is the Rees algebra of the \textit{cover ideal} $%
I_{G} $, which is generated by all monomials $x_{1}^{c_{1}}\cdots
x_{n}^{c_{n}}y_{1}^{c_{n+1}}...y_{n}^{c_{2n}}$, where $c=(c_{1},\ldots
,c_{2n})$ is a $1$-vertex cover of $G$. Thus

\begin{center}
$A(G)=S\oplus I_{G}t\oplus \ldots \oplus I_{G}^{k}t^{k}\oplus \ldots $
\end{center}

Let $\{m_{1},m_{2},...,m_{l}\}$ be the minimal system of generators of $%
I_{G} $. We view $A(G)$ as a standard graded $K$-algebra by assigning to
each $x_{i}$ and $y_{j}$, $1\leq i,j\leq n$ and to each $m_{k}t$, $1\leq
k\leq l$, the degree $1$. Since each monomial $m_{k\text{ }}$corresponds to
a minimal vertex cover of $G$ of cardinality $n$, the Hilbert function of $%
A(G)$ is given by 
\begin{equation}
H(A(G),k)=\sum\limits_{j=0}^{k}\dim _{K}(I_{G}^{j})_{jn+(k-j)}\text{, for
all }k\geq 0\text{.}  \label{equ1}
\end{equation}

\begin{Remark}
\label{iso} \emph{Let $P_{n}=\{p_{1},...,p_{n}\}$ and $P_{n}^{\prime
}=\{p_{1}^{\prime },...,p_{n}^{\prime }\}$ be two isomorphic finite posets
and let $G=G(P_{n})$ and $G^{\prime }=G(P_{n}^{\prime })$. Then the cover
ideals $I_{G}$ and $I_{G^{\prime }}$ are isomorphic as graded $K$-vector
spaces and, consequently, the Hilbert function and series of $A(G)$ and $%
A(G^{\prime })$ coincide. Let $f:P_{n}\rightarrow P_{n}^{\prime }$ be a
poset isomorphism (i.e., $f $ is a bijective map with $p_{i}\leq p_{j}$ if
and only if $f(p_{i})\leq f(p_{j})$)$.$ Then $f$ induces a permutation $g$
of $[n]$, $i\rightarrow g(i) $, defined by $p_{g(i)}^{\prime }=f(p_{i})$,
for every $i\in \lbrack n]. $ We notice that 
\begin{equation}
p_{i}\leq p_{j}\Leftrightarrow f(p_{i})\leq f(p_{j})\Leftrightarrow
p_{g(i)}^{\prime }\leq p_{g(j)}^{\prime },  \label{equ2}
\end{equation}
and we define a map $h:V(G)\rightarrow V(G^{\prime })$ as follows: 
\begin{eqnarray*}
h(x_{i}) &=&x_{g(i)},\text{ if }i\in \lbrack n]\text{,} \\
h(y_{j}) &=&y_{g(j)},\text{ if }j\in \lbrack n].
\end{eqnarray*}
Then $h$ induces a $K$-automorphism of $S$ which maps $I_{G}$ onto $%
I_{G^{\prime }}$, hence, $I_{G}$ and $I_{G^{\prime }}$ are isomorphic as
graded $K$-vector spaces. By (\ref{equ1}), we also have 
\begin{equation*}
H(A(G^{\prime }),k)=\sum\limits_{j=0}^{k}\dim _{K}(I_{G^{\prime
}}^{j})_{jn+(k-j)}\text{, for all }k\geq 0\text{.}
\end{equation*}
Since the powers $I_{G}^{j}$ and $I_{G^{\prime }}^{j}$ are isomorphic as
graded $K$-vector spaces as well, for all $j\geq 1$, we get $%
H(A(G),k)=H(A(G^{\prime }),k)$, for all $k\geq 0$.}
\end{Remark}

We denote by $\mathcal{M}(G)$ the set of minimal vertex covers of a graph $G$%
. Vertex covers and stable sets of a graph $G$ are dual concepts, that is, a
subset $C\subset V(G)$ is a vertex cover of $G$ if and only if the
complement set $V(G)\backslash C$ is a stable set of $G$ (\cite{VanVil}).
Next, inspired by \cite[Lemma 2.5]{VanVil}, we give a recurrence relation to
obtain the set of the minimal vertex covers of a Cohen-Macaulay bipartite graph $G_n$ which comes from a poset 
$P_n=\{p_1,\ldots,p_n\}.$ We denote by $G_{n-1}$ the subgraph of $G_n$ which comes from the poset $P_{n-1}=\{p_1,\ldots,p_{n-1}\}$
and by $V_{n-1}$ the set $\{x_1,\ldots,x_{n-1}\}\cup \{y_1,\ldots,y_{n-1}\}.$

\begin{Proposition}
\label{algo} Let $G_{n}=G(P_{n})$, where $P_{n}=\{p_{1},\ldots ,p_{n}\}$, $%
n\geq 2$, is a poset such that $p_{i}\leq p_{j}$ implies $i\leq j$. Then a
subset $C_{n}\subset V_{n}$ is a minimal vertex cover of $G_{n}$ if and only
if either $C_{n}=C_{n-1}\cup \{y_{n}\}$, where $C_{n-1}\subset V_{n-1}$ is a
minimal vertex cover of $G_{n-1}$ or $C_{n}=C_{n-1}\cup \{x_{n}\}$, where $%
C_{n-1}\subset V_{n-1}$ is a minimal vertex cover of $G_{n-1}$ such that $%
x_{i}\in C_{n-1}$ for each $i\in \lbrack n-1]$ with $p_{i}\leq p_{n}$.
\end{Proposition}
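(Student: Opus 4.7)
The hypothesis that $p_i\leq p_j$ implies $i\leq j$ forces $p_n$ to be a maximal element of $P_n$. Consequently, $E(G_n)\setminus E(G_{n-1})$ consists precisely of $\{x_n,y_n\}$ together with the edges $\{x_i,y_n\}$ for $i\in[n-1]$ with $p_i\leq p_n$, and $x_n$ has no other incident edge. I would run the proof by analysing which of $x_n,y_n$ lies in the minimal cover $C_n$.

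\emph{Forward direction.} Since $\{x_n,y_n\}\in E(G_n)$, at least one of $x_n,y_n$ is in $C_n$; moreover, if $y_n\in C_n$ then $x_n$ is redundant (its only incident edge is already covered), so minimality forces $x_n\notin C_n$. Hence exactly one of $x_n,y_n$ belongs to $C_n$, and I set $C_{n-1}:=C_n\cap V_{n-1}$. Clearly $C_{n-1}$ covers $G_{n-1}$. In the case $y_n\in C_n$, every new edge contains $y_n\in C_n$, so an edge of $G_n$ uniquely covered by some $v\in C_{n-1}$ must be old; hence $v$ is essential in $C_{n-1}$. In the case $x_n\in C_n$, $y_n\notin C_n$, the edge $\{x_i,y_n\}$ with $p_i\leq p_n$ forces $x_i\in C_n\cap V_{n-1}=C_{n-1}$, giving the required extra condition.

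\emph{The delicate step.} The minimality of $C_{n-1}$ in the second case is the main obstacle. A priori, for $v\in C_{n-1}$ an edge of $G_n$ uniquely covered by $v$ could be a new edge $\{x_i,y_n\}$ with $v=x_i$ and $p_i\leq p_n$, in which case the old edges through $x_i$ need not force $x_i$. I would resolve this by showing $y_i\notin C_n$: every neighbour $x_k$ of $y_i$ in $G_n$ satisfies $p_k\leq p_i\leq p_n$, so the edge $\{x_k,y_n\}$ together with $y_n\notin C_n$ forces $x_k\in C_n$; thus every edge of $G_n$ through $y_i$ already has its $x$-endpoint in $C_n$, contradicting the essentialness of $y_i$ in the minimal cover $C_n$. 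Hence $y_i\notin C_{n-1}$, and the old edge $\{x_i,y_i\}\in E(G_{n-1})$ is covered in $C_{n-1}$ only by $v=x_i$.

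\emph{Backward direction.} For the two candidate sets I would verify coverage and essentialness. Coverage is immediate: old edges are covered by $C_{n-1}$, while the new edges are covered either by $y_n$, or, in the second case, by $x_n$ and by the prescribed $x_i\in C_{n-1}$. Essentialness of the added vertex ($y_n$ or $x_n$) is witnessed by the edge $\{x_n,y_n\}$, whose other endpoint lies outside $C_n$ by construction. Essentialness of each $v\in C_{n-1}$ in $C_n$ follows from its essentialness in the minimal cover $C_{n-1}$ of $G_{n-1}$, because the witnessing edge is entirely contained in $V_{n-1}$ and therefore misses both $x_n$ and $y_n$. Apart from the delicate step above, every remaining verification is pure bookkeeping about the star of $y_n$ in $G_n$.
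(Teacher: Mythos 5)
Your proof is correct, but it takes a genuinely different route from the paper's. The paper leans on the Cohen--Macaulay hypothesis: since $G_n$ and $G_{n-1}$ are unmixed with all minimal covers of cardinality $n$, respectively $n-1$, it rules out $|C_n\cap\{x_n,y_n\}|\in\{0,2\}$ by a pure cardinality count (the case $|C_n\cap\{x_n,y_n\}|=2$ would leave a vertex cover of $G_{n-1}$ of size $n-2$), and then minimality of $C_{n-1}$ comes for free from $|C_{n-1}|=n-1$; the ``if'' direction is dismissed as straightforward. You instead argue entirely with private (essential) edges: the observation that $x_n$ has degree one (because $p_n\le p_j$ forces $j=n$) gives $|C_n\cap\{x_n,y_n\}|=1$, and your delicate step --- showing that $y_i\notin C_n$ whenever $p_i\le p_n$ and $y_n\notin C_n$, since all neighbours $x_k$ of $y_i$ satisfy $p_k\le p_i\le p_n$ and are then forced into $C_n$ by the edges $\{x_k,y_n\}$, making $y_i$ redundant --- supplies the private edge $\{x_i,y_i\}$ for $x_i$ in $C_{n-1}$ and so replaces the unmixedness argument for minimality. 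I checked the remaining cases (including your more careful treatment of the backward direction) and they are sound. What your approach buys is independence from the Cohen--Macaulay/unmixedness input: it proves the recurrence for any graph of the form $G(P_n)$ by elementary graph theory, without citing that such graphs are unmixed with covers of cardinality $n$. The cost is a longer case analysis where the paper obtains minimality of $C_{n-1}$ immediately from a cardinality comparison.
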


\begin{proof}
'If' it is straightforward.

Let us proof 'Only if'. Since $G_{n}$ is a Cohen-Macaulay graph, it is
unmixed and all its minimal vertex covers have the same cardinality, namely $%
n$, for every $n\geq 2$.

If $n=2$ the statement obviously holds.

We assume that $n\geq 3$. Let $C_{n}=\{c_{1},\ldots ,c_{n}\}$ be a minimal
vertex cover of $G_{n}$. Put $C_{n}=\{c_{1},\ldots ,c_{n}\}$, $%
C_{n-1}=C_{n}\cap V_{n-1}$ and $C_{n}^{\prime }=C_{n}\cap \{x_{n},y_{n}\}$.
Obviously, $\left\vert C_{n}^{\prime }\right\vert \leq 2$.

If $\left\vert C^{\prime}_n\right\vert =0,$ then $C_n\cap\{x_n,y_n\}=\emptyset,$ which is impossible. 
Now let us suppose that $\left\vert C_{n}^{\prime }\right\vert =2$, hence $%
C_{n}^{\prime }=\{x_{n},y_{n}\}$ and $\left\vert C_{n-1}\right\vert =n-2$.
Since $C_{n}$ is a vertex cover of $G_{n}$, it follows that the intersection
of $C_{n}$ with every edge $\{x_{i},y_{j}\}$ of the subgraph $G_{n-1}$ ($%
1\leq i\leq j\leq n-1$) is a nonempty subset of $C_{n-1}$, hence $C_{n-1}$
is a vertex cover of $G_{n-1}$ of cardinality $n-2.$ But this is impossible since 
 all minimal vertex covers of $G_{n-1}$ have
the cardinality equal to $n-1$. 

It follows that $\left\vert C_{n}^{\prime }\right\vert =1$, $\left\vert
C_{n-1}\right\vert =n-1$ and exactly one of the vertices $x_{n}$ or $y_{n}$
belongs to $C_{n}$. We can put, without loss of generality, either $%
c_{n}=x_{n}$ or $c_{n}=y_{n}$, and $C_{n-1}=\{c_{1},\ldots ,c_{n-1}\}\subset
V_{n-1}$. Since $C_{n}$ is a vertex cover of $G_{n-1}$, the intersection of $%
C_{n}$ with every edge $\{x_{i},y_{j}\}$ of the subgraph $G_{n-1}$ ($1\leq
i\leq j\leq n-1$) is a nonempty subset of $C_{n-1}$, hence $C_{n-1}$ is a
vertex cover of $G_{n-1}$. Moreover, $C_{n-1}$ is a minimal vertex cover of $%
G_{n-1}$, since $\left\vert C_{n-1}\right\vert $ $=n-1$.

If we choose $c_{n}=x_{n}$, then $y_{n}\notin C_{n}$. Since $C_{n}$ is a
vertex cover of $G_{n}$, it follows that $C_{n}\cap
\{x_{i},y_{n}\}=\{x_{i}\} $, for every $\{x_{i},y_{n}\}\in E(G_{n})$ with $%
i\in \lbrack n-1]$, which implies that $x_{i}\in C_{n}$, for each $i\in
\lbrack n-1]$ with $\{x_{i},y_{n}\}\in E(G_{n})$. Hence $x_{i}\in C_{n}\cap
V_{n-1}=C_{n-1}$, for each $i\in \lbrack n-1]$ with $p_{i}\leq p_{n}$.

If we choose $c_{n}=y_{n}$, then there is no (other) restriction on the
minimal vertex cover $C_{n-1}$ of $G_{n-1}$.
\end{proof}

\begin{Remark} Let $G$ be a Cohen-Macaulay bipartite graph which comes from the poset $P_n.$
\emph{\label{alg} By \cite[Theorem 2.1]{HerHib2} there is a one-to-one
correspondence between the set $\mathcal{M}(G)$ and the distributive lattice 
$\mathcal{J}(P_{n})$ of all poset ideals of $P_{n}$. Thus it can be assigned
to each minimal vertex cover $C$ of $G$ the poset ideal $\alpha _{C}$ of $%
P_{n}$ that is defined as $\alpha _{C}=\{p_{i}|x_{i}\in C\}$. Conversely, if 
$\alpha $ is a poset ideal of $P_{n}$, then the corresponding set $C_{\alpha
}=\{x_{i}|p_{i}\in \alpha \}\cup \{y_{j}|p_{j}\not\in \alpha \}$ is a
minimal vertex cover of $G.$ By Proposition \ref{algo}, one may give a
recursive procedure to compute the lattice $\mathcal{J}(P_{n})$. }
\end{Remark}

For $C\in \mathcal{M}(G)$ we denote $m_{C}=(\prod\limits_{x_{i}\in
C}x_{i})\cdot (\prod\limits_{y_{j}\in C}y_{j})$. If $G$ is unmixed, then
each $C\in \mathcal{M}(G)$ has exactly $n$ vertices, hence, deg $m_{C}=n$,
for all $C\in \mathcal{M}(G)$. The next result shows a property of monotony
of the Hilbert function of an unmixed bipartite graph.

\begin{Proposition}
\label{monotonous} Let $G$, $G^{\prime }$ and $G^{\prime \prime }$ be
unmixed bipartite graphs on $V_{n}$, $n\geq 1$, such that $E(G^{\prime
\prime })\subset E(G)\subset E(G^{\prime })$. Then the following
inequalities hold: 
\begin{equation*}
H(A(G^{\prime }),k)\leq H(A(G),k)\leq H(A(G^{\prime \prime }),k),\text{ for
all }k\geq 0.
\end{equation*}
\end{Proposition}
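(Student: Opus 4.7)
The strategy rests on the observation that enlarging the edge set of a graph imposes extra constraints on its $k$-vertex covers, which shrinks the cover ideal and, as a consequence, each graded piece of the vertex cover algebra.

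The first step is to establish the chain of monomial ideal containments $I_{G'}\subset I_{G}\subset I_{G''}$ in $S$. Since $E(G)\subset E(G')$, any vector $c\in\mathbf{N}^{2n}$ satisfying $c(i)+c(j)\geq 1$ for every $\{i,j\}\in E(G')$ automatically satisfies the same inequality for every $\{i,j\}\in E(G)$; thus every $1$-vertex cover of $G'$ is a $1$-vertex cover of $G$. Because $I_{G}$ is the monomial ideal whose monomials are exactly those $x^{a}y^{b}$ whose exponent vector is a $1$-vertex cover of $G$ (and similarly for $G'$), this yields $I_{G'}\subset I_{G}$. Applying the same reasoning to $E(G'')\subset E(G)$ gives $I_{G}\subset I_{G''}$.

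The second step is to take $j$-th powers. Since ideal inclusion is preserved under products, one obtains $I_{G'}^{j}\subset I_{G}^{j}\subset I_{G''}^{j}$ for every $j\geq 0$. Passing to the graded component of degree $jn+(k-j)$ and taking $K$-dimensions, these inclusions become
$$\dim_{K}(I_{G'}^{j})_{jn+(k-j)}\leq \dim_{K}(I_{G}^{j})_{jn+(k-j)}\leq \dim_{K}(I_{G''}^{j})_{jn+(k-j)}.$$
Summing over $j=0,1,\ldots,k$ and invoking formula~(\ref{equ1}) then delivers the claimed chain $H(A(G'),k)\leq H(A(G),k)\leq H(A(G''),k)$.

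The only delicate point is verifying that formula~(\ref{equ1}) applies uniformly to all three graphs with the same value $n$ appearing in the exponent $jn+(k-j)$. This is precisely where the unmixedness hypothesis, combined with the common vertex set $V_{n}$, is used: it ensures that the cover ideals $I_{G}$, $I_{G'}$, $I_{G''}$ are generated in a common degree $n$ and that the three algebras $A(G)$, $A(G')$, $A(G'')$ carry compatible standard gradings. Once this is in place, no further calculation is needed, as the argument is driven entirely by the monotonicity of monomial ideal containments under ideal multiplication and graded-component extraction.
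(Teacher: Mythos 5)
Your proof is correct and follows essentially the same route as the paper: establish $I_{G'}\subset I_{G}\subset I_{G''}$, pass to powers and graded components, and sum via formula~(\ref{equ1}), with unmixedness guaranteeing a common generating degree $n$. The only (harmless) difference is that you derive the ideal containments directly from the containments of the sets of $1$-vertex covers, whereas the paper deduces them from the inclusion of the sets of \emph{minimal} vertex covers $\mathcal{M}(G^{\prime})\subset\mathcal{M}(G)\subset\mathcal{M}(G^{\prime\prime})$ (which itself needs unmixedness); your variant is, if anything, slightly more economical.
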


\begin{proof} 
It is known (\cite[Theorem 5.1.b]{HerHib3}) that $I_{G}=(m_{C}\ |\ C\in \mathcal{%
M}(G))$. Similarly, we have $I_{G^{\prime }}=(m_{C}\ |\ C\in \mathcal{M}%
(G^{\prime }))$ and $I_{G^{\prime \prime }}=(m_{C}\ |\ C\in \mathcal{M}%
(G^{\prime \prime }))$. It follows that all the cover ideals are generated in the same degree $n.$

From the inclusions between the edge sets and the hypothesis of unmixedness, we get $\mathcal{M}(G^{\prime })\subset 
\mathcal{M}(G)\subset \mathcal{M}(G^{\prime \prime }).$
Therefore, $I_{G^{\prime
}}\subset I_{G}\subset I_{G^{\prime \prime }}$. We also have
\begin{equation}
(I_{G^{\prime }}^{a})_{b}\subset (I_{G}^{a})_{b}\subset (I_{G^{\prime \prime
}}^{a})_{b}\text{,}  \label{equ3}
\end{equation}

for all integers $a\geq 1$ and $b\geq 0$, which, by (\ref{equ1}), implies
the desired inequalities.
\end{proof}

It is obvious that, for the Cohen-Macaulay bipartite graphs, the chain
provides the largest number of edges and the antichain the smallest number
of edges.

\begin{Corollary}
\label{hilf} Let $G$ be a Cohen-Macaulay bipartite graph on $V_{n}$, $n\geq
1 $. Then the following inequalities hold: 
\begin{equation}
H(A(G^{\prime }),k)\leq H(A(G),k)\leq H(A(G^{\prime \prime }),k),\text{ for
all }k\geq 0,  \label{equ4}
\end{equation}%
where $G^{\prime }$ and $G^{\prime \prime }$ are bipartite graphs on $V_{n}$
that come from a chain, respectively, an antichain with $n$ elements.
\end{Corollary}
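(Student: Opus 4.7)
The plan is to derive Corollary \ref{hilf} directly from Proposition \ref{monotonous}. Since $G$ is Cohen-Macaulay bipartite, by the Herzog--Hibi characterization recalled in the introduction, after relabeling we may assume $G=G(P_n)$ for a poset $P_n=\{p_1,\ldots,p_n\}$ with $p_i\leq p_j \implies i\leq j$. By Remark \ref{iso}, the Hilbert function is invariant under such a relabeling, so it suffices to compare $G=G(P_n)$ with the two extreme graphs $G'$ and $G''$.

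Next I would verify the edge-set inclusions $E(G'')\subset E(G)\subset E(G')$. Let $P_n'$ be the chain $p_1'<p_2'<\cdots <p_n'$ and $P_n''$ the antichain on $n$ elements; set $G'=G(P_n')$ and $G''=G(P_n'')$. For any poset structure on $\{p_1,\ldots,p_n\}$ compatible with the linear order on indices, an edge $\{x_i,y_j\}$ of $G$ corresponds to a relation $p_i\leq p_j$, which forces $i\leq j$; thus $\{x_i,y_j\}$ is also an edge of the chain graph $G'$. Conversely, for every $i\in[n]$ the reflexivity $p_i\leq p_i$ gives $\{x_i,y_i\}\in E(G)$, which is precisely the edge set of the antichain graph $G''$. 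This yields $E(G'')\subset E(G)\subset E(G')$.

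Finally, all three graphs $G$, $G'$, $G''$ come from posets, hence are Cohen-Macaulay bipartite graphs, and in particular unmixed. Therefore Proposition \ref{monotonous} applies and delivers exactly the chain of inequalities \eqref{equ4}.

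There is no real obstacle in this argument; the entire content is the observation that, among all Cohen-Macaulay bipartite graphs on $V_n$, the one coming from a chain maximizes the edge set and the one coming from an antichain minimizes it, after which Proposition \ref{monotonous} does all the work.
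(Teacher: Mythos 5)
Your proof is correct and follows essentially the same route as the paper: reduce to a labeling with $p_i\leq p_j\implies i\leq j$ via Remark \ref{iso}, verify the edge-set inclusions $E(G'')\subset E(G)\subset E(G')$ (which the paper calls ``straightforward'' and you spell out via reflexivity and the index condition), and then apply Proposition \ref{monotonous}. No issues.
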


\begin{proof}
Let $G$, $G^{\prime }$, respectively, $G^{\prime \prime }$ be graphs that
come from a poset $P_{n}=\{p_{1},..,p_{n}\}$, a chain $P_{n}^{\prime
}=\{p_{1}^{\prime },..,p_{n}^{\prime }\}$, respectively, an antichain $%
P_{n}^{\prime \prime }=\{p_{1}^{\prime \prime },..,p_{n}^{\prime \prime }\}$%
. By Remark \ref{iso} we may asume that $p_{i}\leq p_{j}$ and $%
p_{i}^{\prime }\leq p_{j}^{\prime }$ imply $i\leq j$. It is straightforward
to notice that $E(G^{\prime \prime })\subset E(G)\subset E(G^{\prime })$.
Therefore, by applying Proposition \ref{monotonous}, the desired
inequalities follow.
\end{proof}

The next result stresses a property of monotony for the multiplicity of the
vertex cover algebra for unmixed bipartite graphs.

\begin{Corollary}
\label{multineq} Let $G$, $G^{\prime }$and $G^{\prime \prime }$ be unmixed
bipartite graphs on $V_{n}$ such that $E(G^{\prime \prime })\subset
E(G)\subset E(G^{\prime })$. Then the following inequalities hold: 
\begin{equation*}
e(A(G^{\prime }))\leq e(A(G))\leq e(A(G^{\prime \prime })).
\end{equation*}
\end{Corollary}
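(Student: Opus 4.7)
The plan is to lift the pointwise Hilbert function inequalities of Proposition \ref{monotonous} to inequalities on the leading coefficients of the Hilbert polynomials, that is, on the multiplicities. For the argument to go through, I need the three algebras $A(G), A(G'), A(G'')$ to share a common Krull dimension so that the degree of their Hilbert polynomials matches, and then the pointwise domination of Hilbert functions will force domination of leading coefficients.

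First, I would establish the common dimension. For any unmixed bipartite graph $H$ on $V_n$, the cover ideal $I_H$ is a nonzero ideal in the polynomial domain $S = K[x_1,\ldots,x_n,y_1,\ldots,y_n]$ of Krull dimension $2n$. Since $A(H) = S \oplus I_H t \oplus I_H^2 t^2 \oplus \cdots$ is the Rees algebra of a nonzero ideal in a domain, the standard fact $\dim S[I_H t] = \dim S + 1$ yields $\dim A(H) = 2n+1$. Applying this to $H = G, G', G''$ gives a common Krull dimension $d = 2n+1$.

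Second, by the Hilbert--Serre theorem, the Hilbert function of any standard graded $K$-algebra $A$ of Krull dimension $d$ coincides, for $k \gg 0$, with a polynomial $P_A(k)$ of degree $d-1$, and $e(A) = (d-1)!\, a_{d-1}$, where $a_{d-1}$ is the leading coefficient of $P_A$. Applied to the three algebras of dimension $2n+1$, Proposition \ref{monotonous} yields, for $k \gg 0$,
\[
\frac{H(A(G'),k)}{k^{2n}} \;\leq\; \frac{H(A(G),k)}{k^{2n}} \;\leq\; \frac{H(A(G''),k)}{k^{2n}} .
\]
Passing to the limit $k \to \infty$ on each term produces $\frac{e(A(G'))}{(2n)!} \leq \frac{e(A(G))}{(2n)!} \leq \frac{e(A(G''))}{(2n)!}$, and multiplying by $(2n)!$ gives the desired chain
\[
e(A(G')) \;\leq\; e(A(G)) \;\leq\; e(A(G'')) .
\]

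There is no substantial obstacle here: all the work has already been done in Proposition \ref{monotonous}. The only new ingredient needed is the verification that the three Rees algebras have matching Krull dimension, which is a standard consequence of the cover ideal being a nonzero ideal in the polynomial domain $S$.
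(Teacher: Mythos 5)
Your proposal is correct and follows essentially the same route as the paper: both deduce the result from Proposition \ref{monotonous} together with the fact that $\dim A(G)=\dim S+1=2n+1$, so that all three Hilbert polynomials have degree $2n$ with leading coefficients $e(\cdot)/(2n)!$, and the pointwise inequality of Hilbert functions forces the inequality of leading coefficients. Your limit argument just spells out the final step that the paper leaves implicit.
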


\begin{proof}
By Proposition \ref{monotonous} we have $H(A(G^{\prime })),k)\leq
H(A(G),k)\leq H(A(G^{\prime \prime }),k)$, for all $k\geq 0$. Since $%
H(A(G),k)$, $H(A(G^{\prime }),k)$, respectively, $H(A(G^{\prime \prime }),k)$
are all polynomials of degree $2n$ (since $\dim A(G)=\dim S + 1=2n+1$ \cite%
{BruHer}) with the leading coefficients $\frac{e(A(G))}{(2n)!}$, $\frac{%
e(A(G^{\prime }))}{(2n)!}$, respectively, $\frac{e(A(G^{\prime \prime }))}{%
(2n)!}$, the conclusion follows.
\end{proof}

\section{The Hilbert series of vertex cover algebras of Cohen-Macaulay
bipartite graphs}

Let $S=K[x_{1},\ldots ,x_{n},y_{1},\ldots ,y_{n}]$ be the polynomial ring in 
$2n$ variables over a field $K$ and let $G=G(P_{n})$, where $%
P_{n}=\{p_{1},...,p_{n}\}$ is a poset such that $p_{i}\leq p_{j}$ implies $%
i\leq j$.

We denote $B_{G}=K[\{x_{i}\}_{1\leq i\leq n},\{y_{j}\}_{1\leq j\leq
n},\{u_{\alpha }\}_{\alpha \in \mathcal{J}(P_{n})}]$. The\textit{\ toric
ideal} $Q_{G}$ of $A(G)$ is the kernel of the surjective homomorphism $\phi
:B_{G}\rightarrow A(G)$ defined by $\phi (x_{i})=x_{i}$, $\phi (y_{j})=y_{j}$%
, $\phi (u_{\alpha })=m_{\alpha }t$, where $m_{\alpha
}=(\prod\limits_{p_{i}\in \alpha }x_{i})\cdot (\prod\limits_{p_{j}\not\in
\alpha }y_{j})$, $\alpha\in\mathcal{J}(P_n),$ are the minimal monomial generators of the cover ideal $I_G.$

Let $<_{lex}$ denote the lexicographic order on $K[\{x_{i}\}_{1\leq i\leq
n},\{y_{j}\}_{1\leq j\leq n}]$ induced by the ordering $%
x_{1}>..>x_{n}>y_{1}>..>y_{n}$ and $<^{\#}$ the reverse lexicographic order
on $K[\{u_{\alpha }\}_{\alpha \in \mathcal{J}(P_{n})}]$ induced by an
ordering of the variables $u_{\alpha }$'s such that $u_{\alpha }>u_{\beta }$
if $\beta \subset \alpha $ in $\mathcal{J}(P_{n})$. Let $<_{lex}^{\#}$ be
the monomial order on $B_{G}$ defined as the product of the monomial orders $%
<_{lex}$ and $<^{\#}$ from above. The reduced Gr\"{o}bner basis $\mathcal{G}$
of the toric ideal $Q_{G}$ of $A(G)$ with respect to the monomial order $%
<_{lex}^{\sharp }$ on $B_{G}$ was computed in \cite[Theorem 1.1]{HerHib1}:

\begin{center}
$\mathcal{G}=\{\underline{x_{j}u_{\alpha }}-y_{j}u_{\alpha \cup \{p_{j}\}},$ 
$j\in \lbrack n],\alpha \in \mathcal{J}(P_{n}),p_{j}\not\in \alpha ,\alpha
\cup \{p_{j}\}\in \mathcal{J}(P_{n}),$

$\underline{u_{\alpha }u_{\beta }}-u_{\alpha \cup \beta }u_{\alpha \cap
\beta },\alpha ,\beta \in \mathcal{J}(P_{n}),\alpha \not\subset \beta ,\beta
\not\subset \alpha \}$,
\end{center}

\noindent where the initial monomial of each binomial of $\mathcal{G}$ is
the first monomial.

Let $S_{G}=K[\{u_{\alpha }\}_{\alpha \in \mathcal{J}(P_{n})}]$ be the
polynomial ring in $\left\vert \mathcal{J}(P_{n})\right\vert $ variables
over $K$, let $\bar{A}(G)$ the basic vertex cover algebra and $\Delta (\mathcal{J%
}(P_{n}))$ the order complex of the lattice $(\mathcal{J}(P_{n}),\subset )$ whose vertices are the chains of $P_n$.
(We refer the reader to \cite{BenCon}, \cite[Section 3]{HerHib2} for the
definition and properties of the basic cover algebra associated to a graph
and \cite[\S 5.1]{BruHer} for the definition and properties of the order
complex of a poset.) The \textit{toric ideal} $\bar{Q}_{G}$ of $\bar{A}(G)$
is the kernel of the surjective homomorphism $\pi :S_{G}\rightarrow \bar{A}%
(G)$, $\pi (u_{\alpha })=m_{\alpha }$. The reduced Gr\"{o}bner basis $%
\mathcal{G}_{0}$ of $\bar{Q}_{G}$ with respect to $<^{\#}$ on $S_{G}$ was
computed in \cite[Theorem 3.1]{HerHib2}:

\begin{center}
$\mathcal{G}_{0}=\{\underline{u_{\alpha }u_{\beta }}-u_{\alpha \cup \beta
}u_{\alpha \cap \beta }|\alpha ,\beta \in \mathcal{J}(P_{n}),\alpha
\not\subset \beta ,\beta \not\subset \alpha \}$,
\end{center}

\noindent where the initial monomial of each binomial of $\mathcal{G}_{0}$
is the first monomial.

\begin{Proposition}
\label{vect} The graded $K$-algebra $\bar{A}(G)$ and the the order complex $%
\Delta (\mathcal{J}(P_{n}))$ have the same  $h$-vector.
\end{Proposition}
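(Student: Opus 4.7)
The plan is to exploit the fact that the Gröbner basis $\mathcal{G}_0$ is already displayed above the statement, and that passing to the initial ideal preserves the Hilbert series (hence the $h$-vector) of a standard graded quotient of a polynomial ring.

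First I would read off from $\mathcal{G}_0$ that
\begin{equation*}
\ini_{<^{\#}}(\bar{Q}_{G}) = (u_{\alpha}u_{\beta} \ :\ \alpha,\beta \in \mathcal{J}(P_n),\ \alpha \not\subset \beta,\ \beta \not\subset \alpha).
\end{equation*}
This is a squarefree monomial ideal in $S_G$, so it is the Stanley--Reisner ideal of a simplicial complex $\Delta$ on the vertex set $\{u_{\alpha} : \alpha \in \mathcal{J}(P_n)\}$. By the definition of the Stanley--Reisner correspondence, a subset $\{u_{\alpha_1},\ldots,u_{\alpha_k}\}$ is a face of $\Delta$ if and only if no generator of $\ini_{<^{\#}}(\bar{Q}_{G})$ divides $u_{\alpha_1}\cdots u_{\alpha_k}$, i.e.\ if and only if any two of the $\alpha_i$ are comparable under inclusion in $\mathcal{J}(P_n)$. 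This is exactly the condition that $\{\alpha_1,\ldots,\alpha_k\}$ form a chain in the lattice $(\mathcal{J}(P_n),\subset)$, so $\Delta = \Delta(\mathcal{J}(P_n))$.

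Next I would invoke the standard fact (e.g.\ \cite{BruHer}) that passing from $\bar{Q}_G$ to its initial ideal does not alter the Hilbert function of the quotient. Hence
\begin{equation*}
H_{\bar{A}(G)}(z) = H_{S_{G}/\bar{Q}_{G}}(z) = H_{S_{G}/\ini_{<^{\#}}(\bar{Q}_{G})}(z) = H_{K[\Delta(\mathcal{J}(P_{n}))]}(z).
\end{equation*}
In particular the Krull dimensions agree, and writing each Hilbert series in the form $h(z)/(1-z)^{d}$ (where $d$ is the common Krull dimension) yields the equality of the two $h$-vectors, as required.

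There is no real obstacle here; the statement is essentially a direct translation once the shape of $\mathcal{G}_0$ has been recorded. The only thing to be careful about is to verify that the complex cut out by the initial ideal is literally the order complex of $\mathcal{J}(P_n)$ — that is, to check that the non-faces are exactly the pairs of incomparable poset ideals, so that the faces are exactly the chains — which is immediate from the description of $\ini_{<^{\#}}(\bar{Q}_{G})$.
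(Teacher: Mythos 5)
Your proposal is correct and follows essentially the same route as the paper: the paper's proof likewise observes that $\ini_{<^{\#}}(\bar{Q}_{G})$ coincides with the Stanley--Reisner ideal $I_{\Delta(\mathcal{J}(P_{n}))}$ and concludes via the invariance of the Hilbert series under passage to the initial ideal. You merely spell out the verification that the faces of the complex cut out by the initial ideal are exactly the chains in $(\mathcal{J}(P_{n}),\subset)$, which the paper leaves implicit.
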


\begin{proof}
$\bar{Q}_{G}$ is a graded ideal (generated by binomials) and the initial
ideal $\ini_{<^{\#}}(\bar{Q}_{G})$ of the toric ideal $\bar{Q}_{G}$
coincides with the Stanley-Reisner ideal $I_{\Delta (\mathcal{J}(P_{n}))}$,
hence $S_{G}/\bar{Q}_{G}$ and $K[\Delta (\mathcal{J}(P_{n}))]$ have the same 
$h$-vector. Since $S_{G}/\bar{Q}_{G}\simeq \bar{A}(G)$ as graded $K$%
-algebras, the conclusion follows.
\end{proof}

\begin{Remark}
\label{euler} \emph{Since $\mathcal{J}(P_{n})$ is a full sublattice of the
Boolean lattice $\mathcal{L}_{n}$ on the set $\{p_{1},p_{2},\ldots ,p_{n}\}$ (\cite[Theorem 2.2.]{HerHib2}), it follows that 
$\dim \Delta (\mathcal{J}(P_{n}))=n$. Let $%
h=(h_{0},h_{1},...,h_{n+1})$ be the $h$-vector of $\Delta (\mathcal{J}%
(P_{n}))$ and $\bar{A}(G)$. As we noticed above, the basic vertex cover
algebra $\bar{A}(G)$ can be identified with the Hibi ring $S_{G}/\bar{Q}_{G}$%
, which arises from the distributive lattice $\mathcal{J}(P_{n})$. The $i$%
-th component $h_{i}$ of the $h$-vector of $S_{G}/\bar{Q}_{G}$ and,
consequently, of $\bar{A}(G)$ is equal to the number of linear extensions of 
$P_{n}$, which, seen as permutations of $[n]$, have exactly $i$ descents (%
\cite{ReiWel}). In particular, 
\begin{equation}\label{equh}
h_{i}\geq 0, \text{\ for all\ } 0\leq i\leq n-1,\ h_0=1, \text{ and }h_{n}=h_{n+1}=0. 
\end{equation}
}

\emph{For example, if $P_{n}^{\prime \prime }=\{p_{1}^{\prime \prime
},...,p_{n}^{\prime \prime }\}$ is an antichain, then each permutation of $%
[n]$ can be seen as a linear extension of $P_{n}^{\prime \prime }$, hence,
for all $0\leq i\leq n-1 $, the $i$-th component of the $h$-vector of $%
\Delta(\mathcal{J}(P_n^{\prime\prime}))$ is equal to the number of all
permutations of $[n]$ with exactly $i$ descents, which is the Eulerian
number $A(n,i)$. }
\end{Remark}

For each $\emptyset \neq F\subset \lbrack n]$ we denote by $P_{n}(F)$ the
subposet of $P_{n}$ induced by the subset $\{p_{i}|i\in F\}$. The main
result of the paper relates the Hilbert series of $A(G)$ to the Hilbert
series of $\bar{A}(G_{F})$, for all $F\subset \lbrack n]$, where $G_{F}$
denotes the bipartite graph that comes from the poset $P_{n}(F)$. If $%
F=\emptyset $, then, by convention, the Hilbert series of $\bar{A}(G_{F})$
is equal to $\frac{1}{1-z}$.\\

In order to prove the main theorem we need a preparatory result. \\

Let $\emptyset\neq F\subsetneq [n]$ and let $\alpha$ be a poset ideal of $P_n(\bar{F}),$ where by $\bar{F}$ we mean the complement of $F$ 
in $[n]$. We denote by $\delta_\alpha$ the maximal subset 
of $P_n(F)$ such that $\alpha\cup\delta_\alpha\in \mathcal{J}(P_n)$. Note that 
\[
\delta_\alpha=\cup\{\gamma \ |\ \gamma\subset P_n(F),  \alpha\cup \gamma\in \mathcal{J}(P_n)\}.
\]
If we set $\beta=\alpha\cup \delta_\alpha,$ then, by the definition of $\delta_\alpha,$ $\beta$ has the following property: for any 
$j\in F,$ $p_j\not \in \beta$ implies 
 $\beta\cup\{p_j\}\not\in \mathcal{J}(P_n).$

\begin{Lemma}\label{lemadelta} Let $\emptyset\neq F\subsetneq [n]$ and 
let $\mathcal{S}$ be the set of poset ideals $\beta$ of $P_n$ with the property that for any $j\in F$ such that $p_j\not\in \beta$ we have 
$\beta\cup\{p_j\}\not\in \mathcal{J}(P_n).$ Then the map $\varphi\colon \mathcal{J}(P_n(\bar{F}))\rightarrow \mathcal{S}$ defined 
by $\alpha\mapsto \beta:= \alpha\cup\delta_{\alpha}$, is an isomorphism of posets.
\end{Lemma}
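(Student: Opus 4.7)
The plan is to build an explicit inverse $\psi\colon \mathcal{S}\to\mathcal{J}(P_n(\bar F))$, given by $\psi(\beta)=\beta\cap\{p_i\mid i\in\bar F\}$, and to verify that both $\varphi$ and $\psi$ preserve inclusion. That $\psi$ lands in $\mathcal{J}(P_n(\bar F))$ and is order-preserving is immediate: intersecting a poset ideal of $P_n$ with the subposet on $\bar F$ yields a poset ideal of $P_n(\bar F)$, and intersection respects containment. The identity $\psi\circ\varphi=\id$ is trivial since $\delta_\alpha\subset P_n(F)$ by construction, so $(\alpha\cup\delta_\alpha)\cap P_n(\bar F)=\alpha$.

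The main work is in checking $\varphi\circ\psi=\id_{\mathcal{S}}$. Given $\beta\in\mathcal{S}$, set $\alpha=\psi(\beta)$ and $\gamma=\beta\cap\{p_j\mid j\in F\}$, so $\beta=\alpha\cup\gamma$. Since $\beta\in\mathcal{J}(P_n)$, the definition of $\delta_\alpha$ as a maximum forces $\gamma\subset\delta_\alpha$, giving $\beta\subset\varphi(\alpha)$. For the reverse inclusion $\delta_\alpha\subset\beta$ I would argue by contradiction: pick $p_j\in\delta_\alpha\setminus\beta$ minimal with respect to the order of $P_n$ (necessarily $j\in F$). For every $p_k<p_j$ the inclusion $p_k\in\alpha\cup\delta_\alpha$ holds because $\alpha\cup\delta_\alpha$ is a poset ideal; if $k\in\bar F$ then $p_k\in\alpha\subset\beta$, and if $k\in F$ then $p_k\in\delta_\alpha$ and the minimality of $p_j$ gives $p_k\in\beta$. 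Thus $\beta\cup\{p_j\}$ is a poset ideal of $P_n$ with $p_j\notin\beta$ and $j\in F$, contradicting the defining property of $\mathcal{S}$.

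For order-preservation of $\varphi$, assume $\alpha_1\subset\alpha_2$. The union of two poset ideals of $P_n$ is again a poset ideal, so $\varphi(\alpha_1)\cup\varphi(\alpha_2)=\alpha_2\cup(\delta_{\alpha_1}\cup\delta_{\alpha_2})$ lies in $\mathcal{J}(P_n)$, and $\delta_{\alpha_1}\cup\delta_{\alpha_2}\subset P_n(F)$. The maximality clause in the definition of $\delta_{\alpha_2}$ then forces $\delta_{\alpha_1}\cup\delta_{\alpha_2}\subset\delta_{\alpha_2}$, i.e.\ $\delta_{\alpha_1}\subset\delta_{\alpha_2}$, so $\varphi(\alpha_1)\subset\varphi(\alpha_2)$.

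The main obstacle is the minimality argument showing $\delta_\alpha\subset\beta$; it is the one step that genuinely uses the characterizing property of $\mathcal{S}$ rather than just the formal definition of $\delta_\alpha$, and picking $p_j$ minimal in $P_n$ (not merely in $F$) is what allows the case $k\in F$ to be handled by the minimality itself.
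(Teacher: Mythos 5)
Your proof is correct and follows essentially the same route as the paper: the same inverse $\psi(\beta)=\beta\cap P_n(\bar F)$, with the decisive step in both cases being a minimal-element argument that invokes the defining property of $\mathcal{S}$. If anything you are slightly more explicit than the paper, which simply asserts that $\delta_\alpha=\beta\setminus P_n(\bar F)$ for $\beta\in\mathcal{S}$ where you supply the minimality argument, and your direct maximality proof that $\alpha_1\subset\alpha_2$ forces $\delta_{\alpha_1}\subset\delta_{\alpha_2}$ replaces the paper's argument by contradiction.
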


\begin{proof}
$\varphi$ is invertible. Indeed, the map $\psi:\mathcal{S}\rightarrow \mathcal{J}(P_n(\bar{F}))$ defined by 
$\psi(\beta)=\beta\cap P_n(\bar{F})$ is the inverse of $\varphi$ since if $\alpha=\beta\cap P_n(\bar{F}),$ then, by the property of 
$\beta,$ we have $\delta_{\alpha}=\beta\setminus P_n(\bar{F}).$

Let $\alpha_1\subsetneq \alpha_2$ be poset ideals of $P_n(\bar{F})$ and $\beta_i=\varphi(\alpha_i)=\alpha_i\cup\delta_i,\ i=1,2.$ We 
only need 
to show that $\beta_1\subset\beta_2$ since the strict inclusion follows from the hypothesis $\alpha_1\subsetneq \alpha_2$. Let us 
assume that $\beta_1\not\subset \beta_2$ and let $p_a, a\in F,$ be a minimal element in $\beta_1\setminus \beta_2.$ Since 
$p_a\not\in \beta_2,$ it follows that $\beta_2\cup \{p_2\}$ is not a poset ideal of $P_n.$ Therefore there exists $p_b<p_a$ such that 
$p_b\not\in\beta_2.$ On the other hand, $p_b\in\beta_1$ since $\beta_1\in\mathcal{J}(P_n),$ hence, $p_b\in\beta_1\setminus\beta_2,$
which leads to a contradiction with the choice of $p_a.$ 

Now let $\beta_1\subsetneq \beta_2,$ $\beta_1,\beta_2\in\mathcal{S},$ and assume that 
$\alpha_1=\alpha_2,$ where $\alpha_1=\beta_1\cap P_n(\bar{F}),$ and $\alpha_2=\beta_2\cap P_n(\bar{F}).$ Then 
$\delta_1=\beta_1\setminus P_n(\bar{F})\subsetneq \delta_2=\beta_2\setminus P_n(\bar{F}).$ But this is impossible since 
$\delta_1$ is  maximal among the subsets $\gamma$ of  $P_n(F)$ such that $\alpha_1\cup\gamma\in\mathcal{J}(P_n).$
\end{proof}

We can state now the main theorem which relates the Hilbert series of the vertex cover algebra $A(G)$ to the Hilbert series of 
the basic cover algebras $\bar{A}(G_F)$ for all $F\subset [n].$

\begin{Theorem}
\label{comp} For $F\subset \lbrack n]$ let $H_{\bar{A}(G_{F})}(z)$ be the
Hilbert series of $\bar{A}(G_{F})$ and let $H_{A(G)}(z)$ be the Hilbert
series of $A(G)$. Then:

\begin{equation}
H_{A(G)}(z)=\frac{1}{(1-z)^{n}}\sum\limits_{F\subset \lbrack n]}H_{\bar{A}%
(G_{F})}(z)\left( \frac{z}{1-z}\right) ^{n-\left\vert F\right\vert }\text{.}
\label{equ6}
\end{equation}

In particular, if $h(z)=\sum\limits_{j\geq 0}h_{j}z^{j}$ and $%
h^{F}(z)=\sum\limits_{j\geq 0}h_{j}^{F}z^{j}$, where $h=(h_{j})_{j\geq 0}$
and $h^{F}=(h_{j}^{F})_{j\geq 0}$ are the $h$-vectors of $A(G)$, and,
respectively, $\bar{A}(G_{F})$, then 
\begin{equation}
h(z)=\sum\limits_{F\subset [n]}h^{F}(z) z^{n-\left\vert
F\right\vert }\text{.}  \label{equ7}
\end{equation}
\end{Theorem}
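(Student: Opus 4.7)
The plan is to compute the Hilbert series of $A(G)$ directly from the reduced Gröbner basis $\mathcal{G}$ of $Q_G$ displayed above, partition the resulting $K$-basis of standard monomials according to which $x$-variables appear, and recognize Lemma \ref{lemadelta} as the combinatorial bridge that reassembles each stratum as the Hilbert series of some $\bar A(G_F)$.

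First I would spell out the standard monomials. Since the Hilbert series of $A(G)\cong B_G/Q_G$ equals that of $B_G/\ini_{<^{\sharp}_{\mathrm{lex}}}(Q_G)$, and the initial ideal is monomial with generators $\{x_ju_\alpha\}$ and $\{u_\alpha u_\beta\}$ as in $\mathcal{G}$, a monomial $x^a y^b u^c\in B_G$ is standard iff
(a) $\mathrm{supp}(u^c)$ is a chain in $\mathcal{J}(P_n)$, and
(b) for every $j\in[n]$ with $x_j\mid x^a$ and every $\alpha\in\mathrm{supp}(u^c)$, either $p_j\in\alpha$ or $\alpha\cup\{p_j\}\notin\mathcal{J}(P_n)$.
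Setting $F:=\{j:x_j\mid x^a\}$, condition (b) asserts exactly that every $\alpha\in\mathrm{supp}(u^c)$ belongs to the set $\mathcal{S}_F$ of Lemma \ref{lemadelta}.

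Next I would sum the contributions stratum by stratum. For fixed $F\subset[n]$, monomials $x^a$ with $\mathrm{supp}_x(x^a)=F$ contribute $\bigl(z/(1-z)\bigr)^{|F|}$, monomials in $y_1,\dots,y_n$ contribute $(1-z)^{-n}$, and the $u^c$-part runs over products whose support is a chain in $\mathcal{S}_F$. By Lemma \ref{lemadelta}, the poset isomorphism $\varphi\colon\mathcal{J}(P_n(\bar F))\to\mathcal{S}_F$ sends chains to chains bijectively and preserves the number of support elements used, so the $u$-contribution equals the Hilbert series of $B_{G_{\bar F}}/\ini_{<^{\sharp}}(\bar Q_{G_{\bar F}})=\bar A(G_{\bar F})$ (the latter identification uses the Gröbner basis $\mathcal{G}_0$). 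The two edge cases must be checked by hand: $F=\emptyset$ produces the unrestricted chain condition and gives $H_{\bar A(G_{[n]})}(z)$; $F=[n]$ forces $\mathcal{S}_{[n]}=\{P_n\}$, yielding $\sum_{k\ge0}z^k=1/(1-z)$, which matches the convention $H_{\bar A(G_\emptyset)}(z)=1/(1-z)$. Summing and then reindexing $F\mapsto\bar F$ produces \eqref{equ6}.

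Finally, for the $h$-vector reformulation \eqref{equ7} I would clear denominators: since $\dim A(G)=2n+1$ we have $H_{A(G)}(z)=h(z)/(1-z)^{2n+1}$, while the Hibi ring $\bar A(G_F)\cong K[\Delta(\mathcal{J}(P_n(F)))]$ has Krull dimension $|F|+1$, so $H_{\bar A(G_F)}(z)=h^F(z)/(1-z)^{|F|+1}$; substituting into \eqref{equ6} makes every term on the right acquire denominator $(1-z)^{2n+1}$, and comparing numerators yields \eqref{equ7}. The only genuine obstacle is the identification in the middle step: checking that condition (b) is equivalent to the $\mathcal{S}_F$-condition, and that $\varphi$ preserves the grading (i.e.\ support sizes) well enough that the Hilbert series of the $u$-part really is $H_{\bar A(G_{\bar F})}(z)$ rather than some twist of it; but this is precisely what Lemma \ref{lemadelta} is tailored to deliver.
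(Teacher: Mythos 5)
Your proposal is correct and follows essentially the same route as the paper: both pass to the initial ideal of $Q_G$, split off the $y$-variables to obtain the factor $(1-z)^{-n}$, stratify the remaining standard monomials by the support $F$ of their $x$-part, and invoke Lemma \ref{lemadelta} to identify the $u$-contribution of each stratum with $H_{\bar{A}(G_{\bar{F}})}(z)$ before reindexing $F\mapsto\bar{F}$. (Incidentally, your denominator $(1-z)^{|F|+1}$ for $H_{\bar{A}(G_F)}(z)$ in the final step is the correct one; the paper's $(1-z)^{n+1}$ at that point is a slip.)
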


\begin{proof}
Let $J_{G}=\ini_{<_{lex}^{\#}}(Q_{G})$. It is known that $B_{G}/Q_{G}$ and $%
B_{G}/J_{G}$ have the same Hilbert series. Let $B_{G}^{\prime
}=K[\{x_{i}\}_{1\leq i\leq n},\{u_{\alpha }\}_{\alpha \in \mathcal{J}%
(P_{n})}]$. By using the following $K$-vector space isomorphism%
\begin{equation*}
B_{G}/J_{G}\simeq K[y_{1},y_{2},...,y_{n}]\otimes _{K}B_{G}^{\prime
}/(J_{G}\cap B_{G}^{\prime })\text{,}
\end{equation*}

\noindent we get 
\[H_{A(G)}(z)=H_{B_{G}/Q_{G}}(z)=H_{B_{G}/J_{G}}(z)=\frac{1}{(1-z)^{n}}%
H_{B_{G}^{\prime }/(J_{G}\cap B_{G}^{\prime })}(z).
\]

We need to compute the Hilbert series of $B_{G}^{\prime }/(J_{G}\cap
B_{G}^{\prime })$. To this aim we show that we have an isomorphism of  $K$-vector spaces 
\begin{equation}
B_{G}^{\prime }/(J_{G}\cap B_{G}^{\prime })\simeq\bigoplus\limits_{F\subset
\lbrack n]}\bar{A}(G_{\bar{F}})\otimes _{K}x_{F}K[\{x_{i}\}_{i\in F}]%
\text{.}  \label{equ8}
\end{equation}

For $\emptyset \neq F\subset [n]$ let $J_{F}$ be the initial ideal with respect to $<^{\#}$ of the toric ideal of $\bar{A}(G_F).$
Then $J_{F}=(u_{\alpha }u_{\beta }|\alpha ,\beta \in \mathcal{J}%
(P_{n}(F)),\alpha \not\subset \beta ,\beta \not\subset \alpha )$. If $%
F=\emptyset $, we put by convention $J_{F}=(u_{\emptyset })$.

The basic vertex cover algebra $\bar{A}(G_{\bar{F}})$ can be decomposed
as a $K$-vector space as $\bar{A}(G_{\bar{F}})\simeq\bigoplus\limits_{w%
\notin J_{\bar{F}}}Kw$. We notice that $w\notin J_{\bar{F}}$ if
and only if supp$(w)=\{\alpha _{1},...,\alpha _{s}\}$, $s\geq 0$, where $%
\alpha _{1}\subsetneq\ldots \subsetneq \alpha _{s}$ is a chain in $\mathcal{J}(P_{n}(\bar{F}%
))$. It follows that for $F\subset \lbrack n]$ we have%
\begin{equation*}
V_F:=\bar{A}(G_{\bar{F}})\otimes _{K}x_{F}K[\{x_{i}\}_{i\in F}]\simeq
\bigoplus K v w\text{,}
\end{equation*}

\noindent where the direct sum is taken over all monomials $vw$ with $v$ 
monomial in the variables $x_{i}$ such that supp$(v)=F$ and $w$  monomial
in the variables $u_{\alpha }$ such that $w\notin J_{\bar{F}}$.
As a $K$-vector space, 
$B_{G}^{\prime }/(J_{G}\cap B_{G}^{\prime })$ has the 
decomposition
\[
B_{G}^{\prime }/(J_{G}\cap B_{G}^{\prime })\simeq\bigoplus\limits_{F\subset \lbrack n]}\bigoplus W_F,
\]
where $W_F=\bigoplus K v w^{\prime }$ and the direct sum is taken over all monomials $v$ with 
supp$(v)=F$ and all monomials  $w^{\prime }$ in the variables $%
u_{\alpha }$ with $\alpha \in \mathcal{J}(P_{n})$ such that $v w^{\prime }\neq 0$ modulo $J_{G}\cap B^{\prime}_G$.

In order to prove (\ref{equ8}), we only need to show that for each $F\subset [n],$ the $K$-vector spaces $V_F$ and $W_F$ are
isomorphic. This is obvious for $F=\emptyset$ and $F=[n].$

Let us consider now $\emptyset\neq F\subsetneq [n].$ Based on the previous lemma, we are going to show that there exists a bijection
between the $K$-bases of $V_F$ and $W_F.$

Let $v w$ be an element of the $K$-basis of $V_F.$ This means that $\supp(v)=F$ and $w$ is of the form 
$w=u_{\alpha_1}^{a_1}\cdots u_{\alpha_s}^{a_s}$ for some chain $\alpha_1\subsetneq \ldots \subsetneq \alpha_s$ in 
$\mathcal{J}(P_n),$ $s\geq 1.$ For each $1\leq i\leq s,$ let $\beta_i=\varphi(\alpha_i)\in \mathcal{J}(P_n)$ as it was defined in Lemma \ref{lemadelta}.
We map $v w$ to the monomial $v w^{\prime}$ where $w^{\prime}=u_{\beta_1}^{a_1}\cdots u_{\beta_s}^{a_s}.$ By Lemma \ref{lemadelta}, 
we have that $\beta_1\subsetneq \ldots \subsetneq \beta_s$ is a chain in $\mathcal{J}(P_n)$. Moreover, for any $j\in F$ and any 
$\beta_i$ such that $p_j\not\in \beta_i,$ we have $\beta_i\cup\{p_j\}\not\in \mathcal{J}(P_n).$ Therefore, $v w^{\prime}$ is a monomial
in the $K$-basis of $W_F.$ 

Conversely, let  $v w^{\prime}$ be a monomial from the $K$-basis of $W_F$, where $\supp(v)=F$ and 
$w^{\prime}=u_{\beta_1}^{a_1}\cdots u_{\beta_s}^{a_s},$ with $\beta_1\subsetneq \ldots \subsetneq \beta_s$  a chain in 
$\mathcal{J}(P_n).$ Let $\alpha_i=\beta_i\cap P_n(\bar{F}),$ for $1\leq i\leq s.$ Then  we associate to $v w^{\prime}$ the monomial $v w$ in the $K$-basis of $V_F$, where $w=u_{\alpha_1}^{a_1}\cdots u_{\alpha_s}^{a_s}.$

By using again Lemma \ref{lemadelta} it follows that the above defined maps between the $K$-bases of $V_F$ and $W_F$ are inverse.

By (\ref{equ8}) we get
\[
H_{B_{G}^{\prime }/(J_{G}\cap B_{G}^{\prime })}(z)=\sum\limits_{F\subset
\lbrack n]}H_{\bar{A}(G_{\bar{F}})}(z)\left( \frac{z}{1-z}\right)
^{\left\vert F\right\vert }=\sum\limits_{F\subset \lbrack n]}H_{\bar{A}%
(G_{F})}(z)\left( \frac{z}{1-z}\right) ^{n-\left\vert F\right\vert }.
\]
 Hence 
\[
H_{A(G)}(z)=\frac{1}{(1-z)^{n}}\sum\limits_{F\subset \lbrack n]}H_{\bar{A}%
(G_{F})}(z)\left( \frac{z}{1-z}\right) ^{n-\left\vert F\right\vert }.
\] 
Since 
$H_{A(G)}(z)=\frac{h(z)}{(1-z)^{2n+1}}$
and $H_{\bar{A}(G_{F})}=\frac{%
h^{F}(z)}{(1-z)^{n+1}}$, for all $F\subset \lbrack n]$, it follows that $%
h(z)=\sum\limits_{F\subset \lbrack n]}h^{F}(z) z^{n-\left\vert
F\right\vert }$.
\end{proof}

\begin{Corollary}
\label{hvect} For all $0\leq j\leq n-1$, the $j $-th component $h_{j}$ of
the $h$-vector of $A(G)$ is equal to the number of all linear extensions of
all $n-l$-element subposets of $P_{n} $, which, seen as permutations of $%
[n-l]$, have exactly $j-l$ descents, for all $0\leq l\leq j.$
\end{Corollary}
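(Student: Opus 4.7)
The plan is to read off the combinatorial interpretation directly from the polynomial identity (\ref{equ7}) in Theorem \ref{comp}, using Remark \ref{euler} to decode the $h^F$ coefficients. The only work is bookkeeping: I need to match the exponent of $z$ contributed by the factor $z^{n-|F|}$ with the exponent coming from $h^F(z)$, and then reindex the sum by $l := n - |F|$.

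First I would extract the coefficient of $z^j$ on both sides of the identity
\begin{equation*}
h(z) = \sum_{F \subset [n]} h^F(z)\, z^{n-|F|},
\end{equation*}
which yields
\begin{equation*}
h_j \;=\; \sum_{F \subset [n]} h^F_{j-(n-|F|)},
\end{equation*}
where, by convention, $h^F_m = 0$ for $m < 0$. Grouping the sets $F$ by cardinality via $l = n - |F|$, this becomes
\begin{equation*}
h_j \;=\; \sum_{l=0}^{j}\ \sum_{\substack{F \subset [n] \\ |F| = n-l}} h^F_{j-l}.
\end{equation*}
The upper limit $l \le j$ comes from the condition $j - l \ge 0$; for $l > j$ the corresponding terms vanish.

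Next I would invoke Remark \ref{euler} applied to each subposet $P_n(F)$ of size $n-l$: the component $h^F_{j-l}$ equals the number of linear extensions of $P_n(F)$, viewed as permutations of $[n-l]$, having exactly $j-l$ descents. (Note $h^F_{j-l}$ is automatically zero unless $0 \le j-l \le n-l-1$, and since $j \le n-1$ the latter inequality is equivalent to $l \le j$, consistent with the summation range.) Summing over all $F \subset [n]$ with $|F| = n-l$ gives the total count of such linear extensions over all $(n-l)$-element subposets of $P_n$, and finally summing over $0 \le l \le j$ produces exactly the expression claimed in the corollary.

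There is no real obstacle: the content of the statement is essentially the combinatorial reading of formula (\ref{equ7}). The only point to be cautious about is the boundary behavior in the index $j-l$, where one must check that the convention $h^F_m = 0$ for $m < 0$ (and the vanishing $h^F_{|F|} = h^F_{|F|+1} = 0$ from (\ref{equh})) is compatible with restricting the outer sum to $0 \le l \le j$.
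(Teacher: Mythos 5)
Your proposal is correct and is exactly the argument the paper intends: the paper's proof is a one-line appeal to formula (\ref{equ7}) and Remark \ref{euler}, and you have simply carried out the coefficient extraction and the reindexing $l=n-|F|$ explicitly, with the boundary conventions handled properly.
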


\begin{proof}
It follows immediately from (\ref{equ7}) and Remark \ref{euler}.
\end{proof}

\begin{Corollary}
The $h$-vector of $A(G)$ is unimodal.
\end{Corollary}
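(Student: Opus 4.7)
The plan is to leverage the combinatorial interpretation from Corollary \ref{hvect}: $h_j$ counts pairs $(F,\pi)$ where $F\subseteq[n]$ and $\pi$ is a linear extension of $P_n(F)$, viewed as a sequence of distinct integers, with $\mathrm{des}(\pi)+(n-|F|)=j$, where $\mathrm{des}(\pi)$ denotes the number of descents of the sequence $\pi$. Let $\mathcal{S}_j$ denote this set of pairs and set $w(F,\pi):=\mathrm{des}(\pi)+(n-|F|)$, so that $h_j=|\mathcal{S}_j|$.

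First, I would establish the symmetry $h_j=h_{n-j}$ for $0\leq j\leq n$. The plan is to construct an involution on $\bigcup_j\mathcal{S}_j$ that sends $\mathcal{S}_j$ bijectively to $\mathcal{S}_{n-j}$. Since the weight $w$ has two contributions---the descents of $\pi$ and the size $n-|F|$ of the missing set---the involution should interchange them. A natural candidate sends $(F,\pi)$ to $(F',\pi')$, where $F'$ is a suitable complement of $F$ in $[n]$ and $\pi'$ is a linear extension of $P_n(F')$ whose descent pattern encodes both the ascents of $\pi$ and the positions formerly occupied by elements of $[n]\setminus F$. The delicate point is to flesh out this correspondence so that all constructed sequences remain valid linear extensions of their subposets.

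Second, granted the symmetry, unimodality reduces to showing $h_j\leq h_{j+1}$ for $0\leq j<n/2$. I would construct an explicit injection $\Phi_j\colon\mathcal{S}_j\hookrightarrow\mathcal{S}_{j+1}$. The basic operation is to remove from $\pi$ an element whose removal preserves $\mathrm{des}(\pi)$ while decreasing $|F|$ by one, so that $w$ increases by one. For instance, if $|F|\geq 2$ and $\pi_{|F|-1}<\pi_{|F|}$, then removing the last entry (a maximal element of $P_n(F)$) does the job. When every adjacent pair in $\pi$ is a descent, one has $\mathrm{des}(\pi)=|F|-1$ and hence $w(F,\pi)=n-1$; the bound $j<n/2$ then forces $|F|$ to be small, and the missing set $[n]\setminus F$ must contain some element that can be prepended to $\pi$ compatibly with the order on $P_n$, shifting the pair into $\mathcal{S}_{j+1}$.

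The main obstacle is defining both the involution and the injection uniformly and showing they are well-defined. One must verify that every modification produces a legitimate partial linear extension of $P_n$ (respecting the underlying partial order), and that the injection $\Phi_j$ has no collisions between images arising from different cases of the construction. This reduces to a careful case analysis on the descent pattern of $\pi$ and on which elements of $P_n$ are eligible for insertion or removal---this is where the bulk of the poset-theoretic argument lives, and the place where the hypothesis $j<n/2$ must be used quantitatively.
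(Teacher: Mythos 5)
Your overall skeleton---the symmetry $h_j=h_{n-j}$ plus monotonicity on the first half, both read off from the combinatorial interpretation of Corollary \ref{hvect}---is the same as the paper's, but both of your key steps are left as unconstructed sketches, and in each case the construction as outlined does not work, so there are genuine gaps.

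For the symmetry, the paper does not build an involution at all: it first checks $h_n=1$ and $h_{n+1}=0$ from (\ref{equ7}), then quotes the fact that $A(G)$ is Gorenstein (\cite[Corollary 4.4]{HerHib3}) and invokes the standard symmetry of the $h$-vector of a graded Gorenstein domain (\cite[Corollary 4.3.8 and Remark 4.3.9]{BruHer}). Your proposed weight-exchanging involution on pairs $(F,\pi)$ is never exhibited, and there is no evident candidate; a bijective proof of this Gorenstein symmetry would be a substantial result in its own right, not a verification one can defer. For the inequality $h_j\le h_{j+1}$, your basic operation (delete the last entry of $\pi$ when the final step is an ascent) is not injective: take $P_3$ an antichain and $j=1$; the pairs $(\{1,2\},(1,2))$ and $(\{1,3\},(1,3))$ both lie in $\mathcal{S}_1$, both fall under that rule, and both map to $(\{1\},(1))\in\mathcal{S}_2$. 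Moreover your case split (final step an ascent, versus \emph{all} steps descents) is not exhaustive---consider $\pi=(1,3,2)$, whose final step is a descent but which is not all-descents. The paper instead fixes the number of descents and compares level by level, asserting $\nu(l,j)\le\nu(l+1,j+1)$ (same descent count, subposet one element smaller) and summing over $l$; even that step would need an injection if one wanted full detail, but your aggregated map as stated collapses distinct elements and cannot be repaired without a genuinely new idea.
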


\begin{proof}
By (\ref{equ7}) we get $h_{n+1}=\sum\limits_{F\subset \lbrack
n]}h_{\left\vert F\right\vert +1}^{F}$ and $h_{n}=\sum\limits_{F\subset
\lbrack n]}h_{\left\vert F\right\vert }^{F}$. By using (\ref{equh}) from
Remark \ref{euler}, we have $h_{\left\vert F\right\vert }^{F}=h_{\left\vert
F\right\vert +1}^{F}=0$, for all $\emptyset \neq F\subset \lbrack n]$. Hence 
$h_{n+1}=h_{1}^{\emptyset }=0$ and $h_{n}=h_{0}^{\emptyset }=1$. In \cite[%
Corollary 4.4]{HerHib3} it is proved that $A(G)$ is a Gorenstein ring,
hence, by \cite[Corollary 4.3.8 (b) and Remark 4.3.9 (a)]{BruHer}, $%
h_{i}=h_{n-i}$, for all $0\leq i\leq n$. We denote by $\nu (l,j)$ the number
of all linear extensions of all $n-l$-element subposets of $P_{n}$ which,
seen as permutations of $[n-l]$, have exactly $j-l$ descents. Hence, by
Corollary \ref{hvect}, $h_{j}=\sum\limits_{l=0}^{j}\nu (l,j)$. Let $0\leq
j<j+1\leq \lfloor \frac{n}{2}\rfloor $. Then $\nu (l,j)\leq \nu (l+1,j+1)$,
for all $0\leq l\leq j$, which implies that $h_{j+1}=\nu
(j+1,0)+\sum\limits_{l=0}^{j}\nu (j+1,l+1)\geq \sum\limits_{l=0}^{j}\nu
(l,j)=h_{j}$.
\end{proof}

\begin{Remark}
\emph{\label{hilser} The Hilbert series of the vertex cover algebra $A(G)$
is given by 
\begin{equation*}
H_{A(G)}(z)=\frac{h_{0}+h_{1}z+...+h_{n-1}z^{n-1}+h_{n}z^{n}}{(1-z)^{2n+1}},
\end{equation*}%
where $h=(h_{0},\ldots ,h_{n})$ is the $h$-vector of $A(G)$. In particular,
we recover the known fact that $\dim A(G)=2n+1$. It also follows that the $a$%
-invariant is $a=-n-1.$}
\end{Remark}

\begin{Corollary}
\label{cormult} Let $e(A(G))$ be the multiplicity of $A(G)$ and let $e(\bar{A%
}(G_{F}))$  the multiplicity of $\bar{A}(G_{F})$ for $F\subset \lbrack n]$%
. Then 
\begin{equation*}
e(A(G))=\sum\limits_{F\subset \lbrack n]}e(\bar{A}(G_{F})).
\end{equation*}
\end{Corollary}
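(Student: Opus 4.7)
The plan is to read off the multiplicity identity directly from the $h$-polynomial formula in Theorem \ref{comp}, since multiplicity of a standard graded $K$-algebra is encoded in the value of its $h$-polynomial at $z=1$. Specifically, if $R$ is standard graded of Krull dimension $d$ with $H_R(z)=h_R(z)/(1-z)^d$, then $e(R)=h_R(1)$. From Remark \ref{hilser} we have $\dim A(G)=2n+1$, so $e(A(G))=h(1)$, where $h$ is the $h$-vector of $A(G)$. Likewise, $\bar{A}(G_F)$ is the Hibi ring associated to the distributive lattice $\mathcal{J}(P_n(F))$, so $\dim\bar{A}(G_F)=|F|+1$ and therefore $e(\bar{A}(G_F))=h^F(1)$.

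Now Theorem \ref{comp} furnishes the polynomial identity
\[
h(z)=\sum_{F\subset[n]}h^F(z)\,z^{n-|F|}.
\]
Specializing at $z=1$ collapses all powers $z^{n-|F|}$ to $1$, yielding
\[
h(1)=\sum_{F\subset[n]}h^F(1),
\]
which is exactly the desired formula $e(A(G))=\sum_{F\subset[n]}e(\bar{A}(G_F))$. The one bookkeeping point is the empty-set convention: $H_{\bar{A}(G_\emptyset)}(z)=1/(1-z)$ gives $h^\emptyset(z)=1$ and $e(\bar{A}(G_\emptyset))=1$, which fits the formula. There is no real obstacle here — the entire content lies in Theorem \ref{comp}; this corollary is just its evaluation at $z=1$.
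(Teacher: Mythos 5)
Your proof is correct and is exactly the argument the paper intends: the paper's proof consists of the single line ``It follows immediately from (\ref{equ7})'', i.e., from the identity $h(z)=\sum_{F\subset[n]}h^F(z)z^{n-|F|}$ evaluated at $z=1$, together with the standard fact that the multiplicity is the value of the $h$-polynomial at $1$. Your additional bookkeeping on dimensions and the empty-set convention just makes explicit what the paper leaves implicit.
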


\begin{proof}
It follows immediately from (\ref{equ7}).
\end{proof}

Let $P_{3}=\{p_{1},p_{2},p_{3}\}$ be the poset with $p_{1}\leq p_{2}$ and $%
p_{1}\leq p_{3}$ and $G_{3}=G(P_{3})$. Then $H_{\bar{A}(G_{\emptyset })}(z)=%
\frac{1}{1-z}$, $H_{\bar{A}(G_{\{1\}})}(z)=H_{\bar{A}(G_{\{2\}})}(z)=H_{\bar{%
A}(G_{\{3\}})}(z)=\frac{1}{(1-z)^{2}}$, $H_{\bar{A}(G_{\{1,2\}})}(z)=H_{\bar{%
A}(G_{\{1,3\}})}(z)=\frac{1}{(1-z)^{3}}$, $H_{\bar{A}(G_{\{2,3\}})}(z)=\frac{%
1+z}{(1-z)^{3}}$, $H_{\bar{A}(G_{\{1,2,3\}})}(z)=\frac{1+z}{(1-z)^{4}}$ and
the Hilbert series of $A(G_{3})$ is: 
\begin{equation*}
H_{A(G_{3})}(z)=\frac{1}{(1-z)^3}\sum\limits_{F\subset \lbrack 3]}H_{\bar{A}%
(G_{F})}(z)\left( \frac{z}{1-z}\right) ^{3-\left\vert F\right\vert }=\frac{%
z^{3}+4z^{2}+4z+1}{(1-z)^{7}}.
\end{equation*}

Hence $h_{0}=h_{3}=1$, $h_{1}=h_{2}=4$, $h_{4}=0$, $e(A(G_{3}))=10$. We can
also compute the $h$-vector of $A(G_{3})$ by using Corollary \ref{hvect}. The poset $%
P_{3}$ has two linear extensions, which, seen as permutation of $[3]$, are
equal to $id_{3}$ and $(23)$. Hence $h_{0}=1$, since there exists only one
linear extension of $P_{3}$, which, seen as a permutation of $[3]$, has
exactly $0$ descents. Furthermore, $P_{3}$ has three $2$-element subposets,
the chains $P_{3}(\{1,2\})$ and $P_{3}(\{1,3\})$ with a linear extension
corresponding to $id_{2}$, and the antichain $P_{3}(\{2,3\})$ with two
linear extensions corresponding to $id_{2}$ and $(12)$. Thus $h_{1}=4$,
since there exists only one linear extension of $P_{3}$, which, seen as a
permutation of $[3]$, has exactly $1$ descent and each of the subposets $%
P_{3}(\{1,2\})$, $P_{3}(\{1,3\})$ and $P_{3}(\{2,3\})$ has one linear
extension, which, seen as a permutation of $[2]$, has exactly $0$ descents.%
\newline

Let $\mathcal{L}_{n}$ be the Boolean lattice on $\{p_{1},p_{2},...,p_{n}\}$, 
$n\geq 1$, and $A(p,q)$ be the Eulerian number for $1\leq q\leq n$ and $%
0\leq p<q$. By convention, we put $A(0,0)=1$ and $A(q,q)=0$, for all $1\leq
q\leq n$.

We compute the Hilbert series of the vertex cover algebra of the
Cohen-Macaulay bipartite graphs that come from a chain and an antichain.

\begin{Proposition}
\label{capete} Let $G^{\prime}$ be a bipartite graph that comes from a chain
and $G^{\prime \prime }$ a bipartite graph that comes from an antichain with 
$n$ elements, $n\geq 1$. Then we have

\begin{itemize}
\item[(i)] $H_{A(G^{\prime })}(z)=\frac{(1+z)^{n}}{(1-z)^{2n+1}}. $ In
particular, $e(A(G^{\prime }))=2^{n}$.

\item[(ii)] $H_{A(G^{\prime \prime})}(z)=\frac{\sum\limits_{j=0}^{n}\sum%
\limits_{l=0}^j\binom{n}{l}A(n-l,j-l)z^j}{(1-z)^{2n+1}}. $ In particular, $%
e(A(G^{\prime \prime }))=n!\cdot \sum\limits_{l=0}^{n}\frac{1}{l!}$.
\end{itemize}
\end{Proposition}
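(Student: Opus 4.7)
My plan is to apply Theorem \ref{comp}, specifically the $h$-vector identity (\ref{equ7}), and compute each $h^F$ using the combinatorial description in Remark \ref{euler} (linear extensions of $P_n(F)$ counted by descents). The central observation that makes both cases tractable is that an induced subposet of a chain (respectively antichain) is again a chain (respectively antichain), so the combinatorics on every $F \subset [n]$ is uniform, and the sum in (\ref{equ7}) collapses.

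For (i), I would argue as follows. For $\emptyset \neq F \subset [n]$, $P_n(F)$ is a chain on $|F|$ elements whose unique linear extension is the identity on $[|F|]$, which has zero descents; hence $h^F(z) = 1$ by Remark \ref{euler}. The term $F = \emptyset$ also contributes $h^{\emptyset}(z) = 1$ (forced by the stated convention $H_{\bar A(G_\emptyset)}(z) = 1/(1-z)$). Plugging into (\ref{equ7}),
\[
h(z) = \sum_{F \subset [n]} z^{n-|F|} = \sum_{k=0}^{n} \binom{n}{k} z^{n-k} = (1+z)^n,
\]
which yields the stated Hilbert series and $e(A(G')) = h(1) = 2^n$.

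For (ii), I would proceed analogously. Each $P_n(F)$ is an antichain on $|F|$ elements, so every permutation of $[|F|]$ is a linear extension, and Remark \ref{euler} gives $h^F_i = A(|F|, i)$ (the convention $A(0,0) = 1$ handles $F = \emptyset$). Inserting $h^F(z) = \sum_{i} A(|F|,i) z^i$ into (\ref{equ7}), reindexing by $l = n - |F|$ (so the subsets of that size contribute a factor $\binom{n}{l}$), and reading off the coefficient of $z^j$ gives exactly $h_j = \sum_{l=0}^{j} \binom{n}{l} A(n-l, j-l)$, the numerator claimed. For the multiplicity I would evaluate at $z = 1$ and use the classical identity $\sum_{i \geq 0} A(m,i) = m!$ to obtain
\[
e(A(G'')) = \sum_{l=0}^{n} \binom{n}{l}(n-l)! = n! \sum_{l=0}^{n} \frac{1}{l!}.
\]

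I do not foresee any real obstacle: once Theorem \ref{comp} is in hand, this is a direct unpacking. The only mildly delicate point is keeping the $F = \emptyset$ term bookkeeping consistent, i.e., checking that the conventions $h^{\emptyset}(z) = 1$ and $A(0,0) = 1$ slot in so that the uniform formulas above absorb that boundary case without a separate argument.
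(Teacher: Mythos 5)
Your proposal is correct and follows essentially the same route as the paper: both proofs apply the $h$-vector identity (\ref{equ7}) together with the descent/linear-extension description of $h^F$ from Remark \ref{euler}, exploiting that induced subposets of a chain (resp.\ antichain) are chains (resp.\ antichains), and both obtain the multiplicity in (ii) from $\sum_i A(m,i)=m!$. The bookkeeping for $F=\emptyset$ via $h^{\emptyset}(z)=1$ and $A(0,0)=1$ matches the paper's conventions.
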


\begin{proof}
(i) We may assume that $G^{\prime }=G(P_{n}^{\prime })$, where $%
P_{n}^{\prime }=\{p_{1}^{\prime },p_{2}^{\prime },...,p_{n}^{\prime }\}$ is
the chain with $p_{1}^{\prime }\leq p_{2}^{\prime }\leq ...\leq
p_{n}^{\prime }$. $P_n^{\prime}$ as well as all its subposets have a unique
linear extension. Therefore, the $h$-vector of $G^{\prime}$ is ${\binom{n }{0%
}}, {\binom{n }{1}},\ldots, {\binom{n}{n}}.$

(ii) Let $G^{\prime \prime }=G(P_{n}^{\prime \prime })$, where $%
P_{n}^{\prime \prime }=\{p_{1}^{\prime \prime },...,p_{n}^{\prime \prime }\}$
is an antichain. If $F=[n]$, then, by convention, $A(0,0)=1=h_{0}^{\bar{%
F}}$. If $F\varsubsetneqq \lbrack n]$, then $\mathcal{J}(P_{n}^{\prime
\prime }(\bar{F}))$ is a Boolean lattice on the set $P_{n}^{\prime
\prime }(\bar{F})$, which implies that $\mathcal{J}(P_{n}^{\prime
\prime }(\bar{F}))$ is isomorphic to $\mathcal{L}_{n-l}$, where $%
l=\left\vert F\right\vert $. Therefore, by Remark \ref{euler}, $h_{i}^{%
\bar{F}}=A(n-l,i)$, for all $0\leq i\leq n-l-1$. If $i=n-l$, then $%
A(n-l,i)=0$ (by convention) and $h_{i}^{\bar{F}}=0$ (by Remark \ref%
{euler}), which implies that $A(n-l,i)=h_{i}^{\bar{F}}$. By (\ref{equ6}%
) we have $h_{j}^{\prime \prime }=\sum\limits_{l=0}^{j}\sum\limits 
_{\substack{ F\subset \lbrack n]  \\ \left\vert F\right\vert =l}}h_{j-l}^{%
\bar{F}}$, hence $h_{j}^{\prime \prime }=\sum\limits_{l=0}^{j}\binom{n}{%
l}A(n-l,j-l),$ for all $0\leq j\leq n$.

We get $e(A(G^{\prime \prime }))=\sum\limits_{j=0}^{n}h_{j}^{\prime \prime
}=\sum\limits_{j=0}^{n-1}h_{j}^{\prime \prime }+1
=\sum\limits_{j=0}^{n-1}\sum\limits_{l=0}^{j}\binom{n}{l}A(n-l,j-l)+1=\sum%
\limits_{l=0}^{n-1}\binom{n}{l}$.$\sum\limits_{j=0}^{n-l-1}A(n-l,j)+1$. We
obviously have $\sum\limits_{j=0}^{n-l-1}A(n-l,j)=(n-l)!$, for all $0\leq
l\leq n-1$. Therefore, $e(A(G^{\prime \prime }))=\sum\limits_{l=0}^{n-1}%
\binom{n}{l}\cdot (n-l)!+1=n!\cdot \sum\limits_{l=0}^{n}\frac{1}{l!}$.
\end{proof}

\begin{Remark}
\emph{The reduced Gr\"{o}bner basis $\mathcal{G}^{\prime }$ of the toric
ideal $Q_{G^{\prime }}$ of $A(G^{\prime })$ with respect to the monomial
order $<_{lex}^{\sharp }$ on the polynomial ring $B_{G^{\prime }}$ is: 
\begin{equation*}
\mathcal{G}^{\prime }=\{\underline{x_{j}u_{\{p_{1}^{\prime },\ldots
,p_{j-1}^{\prime }\}}}-y_{j}u_{\{p_{1}^{\prime },...,p_{j}^{\prime }\}}|j\in
\lbrack n]\},
\end{equation*}%
where the initial monomial of each binomial of $\mathcal{G}^{\prime }$ is
the first monomial. }

\emph{We notice that the initial ideal $\ini_{<_{lex}^{\sharp
}}(Q_{G^{\prime }})=(x_{j}u_{\{p_{1}^{\prime },\ldots ,p_{j-1}^{\prime
}\}}|j\in \lbrack n])$ is a complete intersection, which implies that the
toric ideal $Q_{G^{\prime }}$ is a complete intersection. Thus $A(G^{\prime
})$ has a pure resolution given by the Koszul complex. }
\end{Remark}

\begin{Proposition}
\label{uni} Let $G$ be a Cohen-Macaulay bipartite graph on $V_{n}$, $n\geq 1$%
. Then the following assertions hold:

\begin{itemize}
\item[(i)] $G$ comes from a chain if and only if \ $H_{A(G)}(z)=\frac{%
(1+z)^{n}}{(1-z)^{2n+1}}$;

\item[(ii)] $G$ comes from an antichain if and only if $H_{A(G)}(z)=\frac{%
h_{n}^{\prime \prime }z^{n}+h_{n-1}^{\prime \prime }z^{n-1}\ldots
+h_{1}^{\prime \prime }z+h_{0}^{\prime \prime }}{(1-z)^{2n+1}}$, where $%
h^{\prime \prime }=(h_{0}^{\prime \prime },h_{1}^{\prime \prime },\ldots
,h_{n}^{\prime \prime })$ is the $h$-vector of the vertex cover algebra $%
A(G^{\prime \prime })$ of the bipartite graph $G^{\prime \prime }$ that
comes from an antichain $P_{n}^{\prime \prime }=\{p_{1}^{\prime \prime
},p_{2}^{\prime \prime },\ldots ,p_{n}^{\prime \prime }\}$.
\end{itemize}
\end{Proposition}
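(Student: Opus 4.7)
The forward (``only if'') directions of both (i) and (ii) are immediate from Proposition \ref{capete} together with Remark \ref{iso}, so the content of the statement lies in the converse implications. My plan is to leverage the comparison with the extremal Cohen-Macaulay bipartite graphs $G'$ (chain) and $G''$ (antichain) on the same vertex set $V_n$, already introduced in Corollary \ref{hilf}, and to observe that equality of Hilbert series at one of these extremes forces $G$ to coincide \emph{as a graph} with $G'$ or $G''$, respectively.

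For part (i), suppose $H_{A(G)}(z) = (1+z)^n/(1-z)^{2n+1}$. Writing $G = G(P_n)$ with the labeling convention from Remark \ref{iso}, the inclusion $E(G)\subseteq E(G')$ holds, and hence $I_{G'}\subseteq I_G$ and $\mathcal{M}(G')\subseteq \mathcal{M}(G)$; the latter inclusion uses that $G$ and $G'$ are both unmixed of height $n$, so any minimal vertex cover of $G'$ is a vertex cover of $G$ of the minimum size $n$ and is therefore minimal in $G$. Reading formula (\ref{equ1}) at $k=1$ gives $H(A(G),1) = 2n + |\mathcal{M}(G)|$ and likewise for $G'$, so the hypothesis $H_{A(G)}(z) = H_{A(G')}(z)$ (which equals the chain series by Proposition \ref{capete}) yields $|\mathcal{M}(G)| = |\mathcal{M}(G')|$, and hence $\mathcal{M}(G) = \mathcal{M}(G')$. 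Since the minimal vertex covers of a graph determine its edge set (an unordered pair is an edge iff its complement in $V_n$ fails to be a stable set, equivalently iff the pair is met by every minimal vertex cover), we conclude $E(G) = E(G')$, which means that $G$ comes from a chain.

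Part (ii) will be handled by the symmetric argument with $G''$ in place of $G'$: here $E(G'')\subseteq E(G)$ gives $I_G \subseteq I_{G''}$ and $\mathcal{M}(G)\subseteq \mathcal{M}(G'')$, and the same reading of (\ref{equ1}) at $k=1$ upgrades the equality of Hilbert series to $\mathcal{M}(G) = \mathcal{M}(G'')$, whence $E(G) = E(G'')$ and $G$ comes from an antichain. I do not anticipate a serious obstacle, since the monotony half of the argument is already available from Proposition \ref{monotonous} and the recovery of $E(G)$ from $\mathcal{M}(G)$ is the standard duality between vertex covers and stable sets; the only real point is to ensure that the hypothesis on the \emph{whole} Hilbert series is converted into a cardinality statement in the right degree, which is precisely what formula (\ref{equ1}) at $k=1$ accomplishes.
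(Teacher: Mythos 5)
Your proof is correct, and the converse direction is argued by a genuinely different route than the paper's. The paper extracts the single coefficient $h_1$ from the $h$-vector formula (\ref{equ7}) of Theorem \ref{comp}, identifies $h_1=|\mathcal{J}(P_n)|-1$ via the $f$-to-$h$ relation for the order complex $\Delta(\mathcal{J}(P_n))$, and then invokes the purely poset-theoretic facts that $|\mathcal{J}(P_n)|=n+1$ forces a chain and $|\mathcal{J}(P_n)|=2^n$ forces an antichain. You instead read the same numerical invariant off formula (\ref{equ1}) at $k=1$, namely $H(A(G),1)=2n+|\mathcal{M}(G)|$ (which is consistent with the paper, since $|\mathcal{M}(G)|=|\mathcal{J}(P_n)|$ by Remark \ref{alg} and $H(A(G),1)=h_1+2n+1$), and then finish by a sandwich argument: the containments $\mathcal{M}(G')\subset\mathcal{M}(G)\subset\mathcal{M}(G'')$ from the proof of Proposition \ref{monotonous} upgrade the equality of cardinalities to equality of the sets of minimal vertex covers, and the edge set is recovered from $\mathcal{M}(G)$ by the cover/stable-set duality. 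The net effect is that your converse uses only Section 2 material and avoids both Theorem \ref{comp} and the Hibi-ring interpretation of the $h$-vector, at the cost of the extra (but standard and correct) reconstruction step $\mathcal{M}(G)=\mathcal{M}(G')\Rightarrow E(G)=E(G')$; the paper's version is shorter once the $h$-vector machinery of Section 3 is in place. One phrasing slip worth fixing: a pair $\{u,v\}$ is an edge iff \emph{the pair itself} is not a stable set, equivalently iff every minimal vertex cover meets it --- not iff ``its complement in $V_n$ fails to be a stable set''; your parenthetical equivalence is the correct criterion and is what your argument actually uses.
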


\begin{proof} Let us suppose that $G$ comes from a poset $P_{n}=%
\{p_{1},p_{2},...,p_{n}\}$, $n\geq 1$, and let $h=(h_{0},h_{1},...,h_{n})$
be the $h$-vector of $A(G)$. In the first place we need to compute the component $h_1$. 
By using (\ref{equ7}), we get $h_1=h_1^{[n]}+n.$ But $h_1^{[n]}$ is the component of rank $1$ in the 
$h$-vector of $\bar{A}(G).$ By using the formula which relates the $h$-vector to the $f$-vector for the order complex $\Delta(\mathcal{J}(P_n)),$ 
we immediately get $h_1^{[n]}=|\mathcal{J}(P_n)|-n-1,$ which implies that $h_1=|\mathcal{J}(P_n)|-1.$\\
(i) Let  $h_{1}=n$. Then $%
\left\vert \mathcal{J}(P_{n})\right\vert =n+1$, which implies that $P_{n}$
is a chain.\\
(ii) Let $h_{1}=h_{1}^{\prime \prime }=$ $%
\left\vert \mathcal{J}(P_{n}^{\prime \prime })\right\vert -1=2^{n}-1$. Then $\left\vert \mathcal{J}(P_{n})\right\vert
=2^{n}$, which implies that $P_{n}$ is an antichain.

In both cases the converse follows from Proposition \ref{capete}.
\end{proof}

\begin{Proposition}
\label{hine} Let $G$ be a Cohen-Macaulay bipartite graph on $V_{n}$, $n\geq
1 $. If $h=(h_{0},h_{1},...,h_{n})$ is the $h$-vector of $A(G)$, then $%
\binom{n}{j}\leq h_{j}\leq h_{j}^{\prime \prime }$, for all $0\leq j\leq n$,
where $G^{\prime \prime }$ comes from an antichain with $n$ elements and $%
h^{\prime \prime }=(h_{0}^{\prime \prime },h_{1}^{\prime \prime
},...,h_{n}^{\prime \prime })$ is the $h$-vector of $A(G^{\prime \prime })$.
\end{Proposition}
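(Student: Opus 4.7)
The plan is to read off both inequalities from the combinatorial formula
\[
h_j \;=\; \sum_{l=0}^{j}\, \sum_{\substack{F\subset [n]\\ |F|=n-l}} h_{j-l}^{F}
\]
supplied by Corollary \ref{hvect}, where, by Remark \ref{euler} applied to $P_n(F)$, the quantity $h_{i}^{F}$ counts the linear extensions of $P_n(F)$ that, regarded as permutations of $[|F|]$, have exactly $i$ descents (with the convention $h_0^{\emptyset}=1$).

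For the lower bound $\binom{n}{j}\leq h_j$, I would isolate the $l=j$ summand. The labeling assumption $p_i\leq p_j\Rightarrow i\leq j$ on $P_n$ passes to every induced subposet $P_n(F)$; hence the identity permutation is always a linear extension of $P_n(F)$, and as it is the unique permutation of $[|F|]$ with zero descents, we get $h_0^{F}=1$ for every $F\subset [n]$. Therefore the $l=j$ contribution equals $\binom{n}{n-j}=\binom{n}{j}$, while the remaining summands are nonnegative by (\ref{equh}) in Remark \ref{euler}.

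For the upper bound $h_j\leq h_j^{\prime\prime}$, I would compare termwise with the antichain case. For any $F\subset [n]$, the induced subposet $P_n^{\prime\prime}(F)$ of the antichain is itself an antichain on the same labeled set, and $P_n(F)$ is obtained from it by possibly adding order relations. Consequently every linear extension of $P_n(F)$ is also a linear extension of $P_n^{\prime\prime}(F)$; since the antichain admits all permutations, we obtain $h_{j-l}^{F}\leq A(n-l,\,j-l)$, the Eulerian number. Summing over $F$ of size $n-l$ and over $l$ gives
\[
h_j \;\leq\; \sum_{l=0}^{j}\binom{n}{l}\, A(n-l,\,j-l) \;=\; h_j^{\prime\prime}
\]
by the closed formula in Proposition \ref{capete}(ii).

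The main subtle point is verifying that the descent counts of linear extensions are comparable between $P_n(F)$ and $P_n^{\prime\prime}(F)$; this relies on using the common labeling convention $p_i\leq p_j\Rightarrow i\leq j$ (available for both $P_n$ and $P_n^{\prime\prime}$ by Remark \ref{iso}). Once this identification is in place, the proof is a direct bookkeeping application of Corollary \ref{hvect}, with no genuine obstacle beyond the reindexing $|F|=n-l$.
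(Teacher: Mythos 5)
Your proof is correct and takes essentially the same route as the paper's: both read the two bounds off the descent-counting formula of Corollary \ref{hvect}, getting the lower bound $\binom{n}{j}$ from the unique descent-free linear extension (the identity) of each naturally labeled $(n-l)$-element subposet, and the upper bound by comparing termwise with the Eulerian numbers $A(n-l,j-l)$ arising from the antichain's subposets. Your write-up just makes explicit the termwise comparisons that the paper states more tersely.
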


\begin{proof}
We may assume without loss of generality that $G=G(P_{n})$, where $%
P_{n}=\{p_{1},...,p_{n}\}$ is a poset such that $p_{i}\leq p_{j}$ implies $%
i\leq j$. Let $P_{n}^{\prime }=\{p_{1}^{\prime },p_{2}^{\prime
},...,p_{n}^{\prime }\}$ the chain with $p_{1}^{\prime }\leq p_{2}^{\prime
}\leq ...\leq p_{n}^{\prime }$ and $P_{n}^{\prime \prime }=\{p_{1}^{\prime
\prime },p_{2}^{\prime \prime },...,p_{n}^{\prime \prime }\}$ an antichain.
By using (\ref{equ7}) and (\ref{equh}), we get $h_{0}=1=h_{0}^{\prime \prime }$ and $%
h_{n}=1=h_{n}^{\prime \prime }.$ Let $1\leq j\leq n-1$. By 
Corollary \ref{hvect}, $h_{j}$ is equal to the number of  all
linear extensions of all $n-l$-element subposets, which, seen as
permutations of $[n-l]$, have exactly $j-l$ descents, for all $0\leq l\leq j$%
. Each $n-l$-element subposet of $P_{n}^{\prime }$, respectively, $%
P_{n}^{\prime \prime }$ is a chain, respectively, an antichain, hence it has
only one linear extension which corresponds to $id_{n-l}$, respectively, it
has $(n-l)!$ linear extensions which correspond to all permutations of $%
[n-l] $. Therefore $\binom{n}{j}\leq h_{j}\leq h_{j}^{\prime \prime }$, for
all $1\leq j\leq n-1$.
\end{proof}

\begin{Corollary}
\label{ineq} Let $G$ be a bipartite graph that comes from a poset with $n$
elements, $n\geq 1$. Then $2^{n}\leq e(A(G))\leq n!\sum\limits_{l=0}^{n}%
\frac{1}{l!}$. The left equality holds if and only if the poset is a chain
and the right equality holds if and only if the poset is an antichain.
\end{Corollary}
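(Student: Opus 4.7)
The plan is to read off everything from the Hilbert-series summation formula of Theorem~\ref{comp} and the componentwise $h$-vector estimates already at hand. Recall that for a standard graded $K$-algebra of dimension $d$ with Hilbert series $\frac{h(z)}{(1-z)^d}$, the multiplicity equals $h(1)=\sum_j h_j$. Since $A(G)$ has dimension $2n+1$ and $h$-vector $h=(h_0,\ldots,h_n)$ (see Remark~\ref{hilser}), we have $e(A(G))=\sum_{j=0}^n h_j$. Similarly $e(A(G'))=\sum \binom{n}{j}=2^n$ and $e(A(G''))=\sum h_j''$, and by Proposition~\ref{capete}(ii) the latter equals $n!\sum_{l=0}^n 1/l!$.

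The inequality part is then immediate: Proposition~\ref{hine} gives $\binom{n}{j}\le h_j\le h_j''$ for every $0\le j\le n$, so summing over $j$ yields
\[
2^n=\sum_{j=0}^n\binom{n}{j}\ \le\ \sum_{j=0}^n h_j\ =\ e(A(G))\ \le\ \sum_{j=0}^n h_j''\ =\ n!\sum_{l=0}^n\frac{1}{l!}.
\]

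For the equality cases, I would use the componentwise nature of Proposition~\ref{hine} together with the characterization of chains and antichains given in Proposition~\ref{uni}. If $e(A(G))=2^n$, then since each summand satisfies $h_j\ge\binom{n}{j}$ and the sums coincide, we must have equality in every component; in particular $h_1=n$, and Proposition~\ref{uni}(i) forces $P_n$ to be a chain. Conversely, if $P_n$ is a chain then Proposition~\ref{capete}(i) gives $e(A(G))=2^n$. The antichain case is analogous: if $e(A(G))=n!\sum 1/l!=\sum h_j''$, then the upper bound $h_j\le h_j''$ must be attained in every component, hence in particular $h_1=h_1''=2^n-1$, and Proposition~\ref{uni}(ii) forces $P_n$ to be an antichain; the converse direction is Proposition~\ref{capete}(ii).

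There is essentially no obstacle here since all the work has been done in Propositions~\ref{capete}, \ref{uni} and \ref{hine}; the only delicate point is to notice that equality of the \emph{sums} in Proposition~\ref{hine} forces equality \emph{componentwise}, which is automatic because every term $h_j-\binom{n}{j}$ (respectively $h_j''-h_j$) is nonnegative. After that, invoking the $h_1$-based characterization of Proposition~\ref{uni} finishes the equality cases cleanly.
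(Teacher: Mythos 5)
Your proof is correct and follows essentially the same route as the paper: sum the componentwise bounds $\binom{n}{j}\le h_j\le h_j''$ from Proposition~\ref{hine}, evaluate the extremes via Proposition~\ref{capete}, and then use nonnegativity of the differences to upgrade equality of multiplicities to componentwise equality of $h$-vectors, which Proposition~\ref{uni} converts into the chain/antichain characterization. The paper's proof does exactly this (it also mentions Corollary~\ref{multineq} as an alternative for the inequality part), so there is nothing to add.
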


\begin{proof}
Let $G^{\prime }=G(P_{n}^{\prime })$ and $G^{\prime \prime }=G(P_{n}^{\prime
\prime })$, where $P_{n}^{\prime }=\{p_{1}^{\prime },p_{2}^{\prime },\ldots
,p_{n}^{\prime }\}$ is a chain and $P_{n}^{\prime \prime }=\{p_{1}^{\prime
\prime },p_{2}^{\prime \prime },\ldots ,p_{n}^{\prime \prime }\}$ is an
antichain. We may assume without loss of generality that $p_{1}^{\prime
}\leq p_{2}^{\prime }\leq ...\leq p_{n}^{\prime }$ and $G=G(P_{n})$, where $%
P_{n}=\{p_{1},p_{2},...,p_{n}\}$ is a poset such that $p_{i}\leq p_{j}$
implies $i\leq j$. Let $h$, $h^{\prime }$, respectively, $h^{\prime \prime }$
be the $h$-vector of $A(G)$, $A(G^{\prime })$, respectively, $A(G^{\prime
\prime })$. By summing up the inequalities $h_{j}^{\prime }\leq h_{j}\leq
h_{j}^{\prime \prime }$ from Proposition \ref{hine} or by applying Corollary %
\ref{multineq}, we obtain $e(A(G^{\prime }))\leq e(A(G))\leq e(A(G^{\prime
\prime }))$. Next, from Proposition \ref{capete}, we get the desired
inequalities. The left equality, respectively, the right equality holds if
and only if $h_{j}^{\prime }=h_{j}$, respectively, $h_{j}=h_{j}^{\prime
\prime }$, for all $0\leq j\leq n$, therefore, by using Proposition \ref{uni}%
, this is equivalent to $P_{n}=P_{n}^{\prime }$, respectively, $%
P_{n}=P_{n}^{\prime \prime }$.
\end{proof}

\section*{Acknowledgment}

I would like to  thank Professor J\"{u}rgen Herzog for very useful
suggestions and discussions on the subject of this paper. I am also very
grateful to Professor Volkmar Welker who explained to me the combinatorial
significance of the $h$-vector of a Hibi ring.

\end{document}